\documentclass[12pt]{amsart}
\usepackage{amssymb,amsmath,amsthm,comment}
\usepackage{pdfpages,graphicx} 
\usepackage[section]{placeins} 
\usepackage{wrapfig}
\usepackage{float}
\usepackage{lineno}
\usepackage{youngtab}

\usepackage{setspace}
 \oddsidemargin = 1.2cm \evensidemargin = 1.2cm \textwidth =5.4in
\textheight =7.7in
\newcommand{\shrinkmargins}[1]{
  \addtolength{\textheight}{#1\topmargin}
  \addtolength{\textheight}{#1\topmargin}
  \addtolength{\textwidth}{#1\oddsidemargin}
  \addtolength{\textwidth}{#1\evensidemargin}
  \addtolength{\topmargin}{-#1\topmargin}
  \addtolength{\oddsidemargin}{-#1\oddsidemargin}
  \addtolength{\evensidemargin}{-#1\evensidemargin}
  }
\shrinkmargins{0.5}

\newtheorem{theorem}{Theorem}
\newtheorem{lemma}[theorem]{Lemma}

\newtheorem{corollary}[theorem]{Corollary}
\newtheorem*{theorem*}{Theorem}

\newtheorem{conjecture}{Observation}

{Claim}

\newtheorem*{thm12}{Theorem~\ref{th03}}
\newtheorem*{thm13}{Theorem~\ref{th12}}
\theoremstyle{definition}

\theoremstyle{remark}

\newtheorem*{remarks}{{\bf Remarks}}

\newtheorem*{Acknowledgments}{Acknowledgments}
\numberwithin{theorem}{section} \numberwithin{equation}{section}

\usepackage{caption}
\usepackage{multirow}

\def\func#1{\mathop{\rm #1}}%

\begin{document}
\title[Log-Concave Infinite Products]{Log-Concavity of Infinite Product Generating Functions}
\author{Bernhard Heim }
\address{Lehrstuhl A f\"{u}r Mathematik, RWTH Aachen University, 52056 Aachen, Germany}
\email{bernhard.heim@rwth-aachen.de}
\author{Markus Neuhauser}
\address{Kutaisi International University, 5/7, Youth Avenue,  Kutaisi, 4600 Georgia}
\address{Lehrstuhl A f\"{u}r Mathematik, RWTH Aachen University, 52056 Aachen, Germany}
\email{markus.neuhauser@kiu.edu.ge}
\subjclass[2010] {Primary 05A17, 11P82; Secondary 05A20}
\keywords{Generating functions, Log-concavity, Partition numbers.}
\begin{abstract}
In the $1970$s Nicolas proved that the coefficients $p_d(n)$ defined by the generating function
\begin{equation*}
\sum_{n=0}^{\infty} p_d(n) \, q^n = \prod_{n=1}^{\infty} \left( 1- q^n\right)^{-n^{d-1}}
\end{equation*}
are log-concave for $d=1$. Recently, Ono, Pujahari, and Rolen have extended the result to $d=2$.
Note that $p_1(n)=p(n)$ is the partition function and
$p_2(n)=\func{pp}\left( n\right) $ is the number of plane partitions.
In this paper, we invest in properties for $p_d(n)$ for general $d$. Let $n \geq 6$.
Then $p_d(n)$ is almost log-concave for $n$ divisible by $3$ and almost strictly log-convex otherwise.
\end{abstract}
\maketitle
\section{Introduction and main results}
Many partition functions have infinite products as generating function.
Numerous researchers have investigated
their asymptotic behaviors and combinatorial properties.
Let $ \alpha:= (\alpha_n)_n$ be a sequence of
positive integers. In this paper, we investigate
the log-concavity of
the coefficients $p_{\alpha}(n)$ defined by \cite{An98}, chapter $6$:
\begin{equation*}
1+ \sum_{n=1}^{\infty} p_{\alpha}(n) \, q^n := \prod_{n \in S} 
\left( 1- q^n \right)^{-\alpha_n}, \qquad S\subset \mathbb{N}, \,\,  \vert q \vert < 1.
\end{equation*}
We focus on $S=\mathbb{N}$ and the sequences 
$\alpha_n(d):= n^{d-1}$, $(d \in \mathbb{N})$ and denote the coefficients by $p_d(n)$.
A sequence of non-negative integers $
\left( a_{n}\right) _{n}
$ is considered to be log-concave
at $n$ (\cite{St89, Br89} and \cite{OPR22} introduction) if
\begin{equation*}
\Delta(n):= a_n^2-a_{n-1} \, a_{n+1} \geq 0.
\end{equation*}
%
Most prominent is {\it the} partition function
$p(n)$, counting all non-ordered decompositions
of a natural number $n$ as
a sum of positive integers.
Euler proved the identity $p(n)=p_1(n)$, the famous
Eulerian infinite product identity
$$\sum_{n=0}^{\infty} p(n) \, q^n = \prod_{n=1}^{\infty} \frac{1}{1 - q^n}.$$
Building on the celebrated work of Hardy and Ramanujan \cite{HR18} on the asymptotic formula
\begin{equation*}
p(n) \sim \frac{1}{4 n \sqrt{3}}\,\, e^{\pi \sqrt{2n/3}}
\end{equation*}
and Lehmer \cite{Le39} and Rademacher \cite{Ra37}, Nicolas \cite{Ni78}, in 1978, proved
that $p(n)$ is log-concave if and only if
$n$ is an even integer or $n$ is odd and $n >25$. We refer to recent work
by DeSalvo and Pak \cite{DP15}.
Another prominent partition function is provided by the 
plane partitions $\func{pp}\left( n\right)$. For further information we refer to Andrews, 
Krattenthaler, and Stanley \cite{An98,
Kr16, St89} and Section~\ref{abschnitt2}.
A century ago, MacMahon  \cite{Ma99, Ma60} proved the 
fundamental identity $\func{pp}\left( n\right) = p_2(n)$.
Improving on the works of E. M. Wright \cite{Wr31}
on asymptotic formulas of $\func{pp}\left( n\right) $, 
Ono, Pujahari, and Rolen \cite{OPR22} finally proved that $\func{pp}\left( n\right) $ is log-concave if and only if
$n$ is an even integer or $n$ is odd and $n>11$. 
Previously it was known \cite{HNT21}, that $\func{pp}\left( n\right) $ is 
log-concave for almost all $n$ and tested up to $10^5$.

Supplied with these results
we analyze the log-concavity patterns for all $d\geq 1$.
Table \ref{table1} displays the results for
$1 \leq d \leq 8$.
\begin{table}[H]
\begin{tabular}{r|l|l|l}
\hline
$d$&log-concave&strictly log-convex&proof / verification\\ \hline \hline
$1$&$n > 25 , 1 < n < 25$ even&$1 \leq n \leq 25$ odd&1978 Nicolas ($ n \geq 1$)\\
$2$&$n > 11,   1 < n < 11$ even &$1 \leq n \leq 11$ odd&2022 Ono et al.\ ($n \geq 1$)\\
$3$&$n >7 , 1 < n < 7$ even&$1 \leq n \leq 7$ odd &2022 H--N ($n \leq 10^{5}$)\\
$4$&$n >5, \{2,3,4\}$&$\{1,5\}$& 
\phantom{xxxxxx}\vdots
\\
$5$&$n >5, \{2,3,4\}$&$\{1,5\}$&  
\phantom{xxxxxx}\vdots
\\
$6$&$n >5, \{2,3\}$&$\{1,4,5\}$&   \phantom{xxxxxx}\vdots
\\
$7$&$n >5, \{2,3\}$&$\{1,4,5\}$&   2022 H--N ($n \leq 10^{5
}$) 
\\
$8$&$n >5, \{2,3\}$&$\{1,4,5\}$&2022 H--N ($n \leq 10^{4}$)\\
\hline
\end{tabular}
\caption{\label{table1}}
\end{table}
Let $d$ be fixed. We
call a natural number $n$ an exception if
$\Delta_d(n)=\left( p_{d}\left( n\right) \right) ^{2}-p_{d}\left( n-1\right) p_{d}\left( n+1\right) <0$.
This indicates that $p_d(n)$ is strictly log-convex at $n$. It is obvious that $n=1$ is an exception for
all $d$. Moreover, Table \ref{table1} indicates that $n=4$ and $n=5$ are also candidates for  $\Delta_d(n) <0$ for $d>4$.
It turns out that this is true for $n \geq 6$ coprime to $3$ and almost all $d$. The bounds depend on $n$.
Thus, let us first study $\Delta_d(n)$ for small $n$.
%
\begin{theorem}\label{EINS}
Let $1 \leq n \leq 9$. 
Then for all $d \in \mathbb{N}$ we have:
\begin{equation*}
\begin{array}{|c|c|c|}
\hline
n &  \text{ strictly log-convex if and only if }
\\
\hline
1 &  d \geq 1 \\
2 & \emptyset \\
3 & 1 \leq d \leq 3 \\
4 & d \geq 6 \\
5 & 1 \leq d \leq 9\\
6 & \emptyset \\
7 &  1 \leq d \leq 3, d \geq 11 \\
8  & d \geq 9 \\
9  & 1 \leq d \leq 2\\
\hline
\end{array}
\end{equation*}
\end{theorem}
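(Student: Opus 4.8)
The plan is to turn each $\Delta_d(n)$, for the finitely many $n\le 9$ at hand, into an explicit finite exponential sum in $d$ and then determine its sign. First I would record the logarithmic-derivative recursion for the coefficients. Writing $\log\prod_{m\ge 1}(1-q^m)^{-m^{d-1}}=\sum_{N\ge1}\frac{\sigma_d(N)}{N}q^N$ with $\sigma_d(N)=\sum_{m\mid N}m^d$, and differentiating, one obtains
\begin{equation*}
n\,p_d(n)=\sum_{N=1}^{n}\sigma_d(N)\,p_d(n-N),\qquad p_d(0)=1.
\end{equation*}
Iterating this for $0\le n\le 10$ expresses each $p_d(n)$, and hence each $\Delta_d(n)=p_d(n)^2-p_d(n-1)p_d(n+1)$, as an explicit $\mathbb{Q}$-linear combination $\sum_j c_j\,\mu_j^{\,d}$, where the $\mu_j$ are distinct positive rationals built from the primes $\le n+1$ (a monomial $(2^d)^a(3^d)^b(5^d)^c(7^d)^e$ equals $(2^a3^b5^c7^e)^d$). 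This reduces every row of the table to a statement about the sign of one such generalized Dirichlet polynomial on the positive integers.

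Second, I would settle the behaviour for large $d$. Since $\sigma_d(k)/k\sim k^{\,d-1}$, the contribution to $p_d(n)$ from a partition $\lambda\vdash n$ grows like $\bigl(\prod_i\lambda_i\bigr)^{d-1}$, so the dominant base of $p_d(n)$ is $P(n):=\max_{\lambda\vdash n}\prod_i\lambda_i$, with $(P(n))_{1\le n\le10}=(1,2,3,4,6,9,12,18,27,36)$. Consequently the leading base of $\Delta_d(n)$ is the larger of $P(n)^2$ and $P(n-1)P(n+1)$, so that the sign of $\Delta_d(n)$ for large $d$ is governed by the log-concavity of the sequence $P$: it is $+$ for $n\in\{2,3,6,9\}$ and $-$ for $n\in\{4,7\}$. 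The two ties $P(5)^2=P(4)P(6)=36$ and $P(8)^2=P(7)P(9)=324$ are exactly the delicate cases; there I would compare the next data, namely the coefficients of the leading power (which resolve $n=5$, giving $\Delta_d(5)\sim\tfrac14\,36^{\,d-1}>0$) and, for $n=8$, where both the leading base and the leading coefficient $\tfrac14$ cancel, the genuinely subleading terms.

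Third, to pass from asymptotics to the exact integer thresholds in the table, I would treat $d$ as a real variable. An exponential sum $\sum_{j=1}^{t}c_j\mu_j^{\,d}$ with distinct bases $\mu_j>0$ forms a Chebyshev system and hence has at most $t-1$ real zeros; combined with the known sign at $+\infty$ and with direct evaluation at small $d$, this pins down the complete sign pattern. Concretely, factoring out the dominant base gives $\Delta_d(n)=\mu_1^{\,d}\bigl(c_1+\sum_{j\ge2}c_j(\mu_j/\mu_1)^d\bigr)$, and a geometric bound on the tail shows that $c_1$ dominates once $d\ge D_n$ for an explicit $D_n$, fixing the sign on $[D_n,\infty)$; the finitely many $d<D_n$ are then checked by hand. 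This yields the stated ranges, including the immediate case $n=1$ (where $\Delta_d(1)=-2^{\,d-1}<0$ for all $d$), the purely initial segments $n\in\{3,5,9\}$, the purely final segments $n\in\{4,8\}$, the empty cases $n\in\{2,6\}$, and the double crossing for $n=7$ (convex for $d\le3$, concave for $4\le d\le10$, convex again for $d\ge11$).

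The hard part will be the two degenerate rows $n=7$ and $n=8$. For $n=7$ the function $\Delta_d(7)$ must change sign twice, so the crude tail bound giving $D_7$ is worthless unless it is sharp enough to reach the true upper threshold $d=11$; near that transition several bases (those with $\prod_i\lambda_i$ close to $12$) are comparable in size, so a naive geometric estimate overshoots, and one must track the competing terms precisely, or prove monotonicity of the normalized tail, to certify that convexity resumes at exactly $d=11$ and that no further crossing occurs. The case $n=8$ is worse still: both the leading base $324$ and its coefficient $\tfrac14$ cancel between $p_d(8)^2$ and $p_d(7)p_d(9)$, so the entire asymptotic sign—and the threshold $d=9$—is decided by the second tier of terms, and showing that this residual sum is negative for every $d\ge9$ while positive for $2\le d\le8$ is the most demanding estimate in the proof.
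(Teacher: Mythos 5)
Your proposal is correct and takes essentially the same route as the paper: explicit exponential-sum expansions of $p_d(n)$ for small $n$ (you via the logarithmic-derivative recursion, the paper via Kostant's formula, yielding the same data as its Lemma~\ref{pdn}), dominance of the maximal partition product (the paper's Lemma~\ref{known}), tie-breaking at $n=5$ by leading coefficients and at $n=8$ by second-tier terms (the paper's Case 4 in Theorem~\ref{verbesserung}), and explicit geometric thresholds supplemented by finite verification of the remaining $d$. The Chebyshev-system zero count is an extra ingredient the paper does not need, and your flagged difficulties are exactly where the paper also resorts to direct computation (e.g.\ $4\leq d\leq 10$ and $11\leq d\leq 58$ for $n=7$, and $9\leq d\leq 102$ for $n=8$).
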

Obviously, Theorem \ref{EINS} illustrates some irregular behavior of $\Delta_d(n)$.
Nevertheless, there is a dominant pattern for each $n \geq 6$, coprime to $3$, forcing
$\Delta_d(n)$ to be negative for almost all $d$. We also have an interesting result for $n$ divisible by $3$.
\begin{theorem}\label{th03}
\label{th0}Let $n \geq 6$
be divisible by $3$. Then
$\Delta_d(n)>0$ for almost all $d$.
\end{theorem}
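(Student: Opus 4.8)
The plan is to study $p_d(n)$ for fixed $n$ as $d\to\infty$ by isolating the dominant term in its partition expansion. First I would take logarithms of the defining product with $\alpha_m=m^{d-1}$ to obtain
\[
1 + \sum_{m=1}^{\infty} p_d(m)\,q^m = \exp\Bigl(\sum_{j=1}^{\infty} c_j\,q^j\Bigr), \qquad
c_j = \frac{1}{j}\sum_{e\mid j} e\,\alpha_e = \frac{\sigma_d(j)}{j},
\]
where $\sigma_d(j)=\sum_{e\mid j}e^{d}$. Expanding the exponential yields the partition sum
\[
p_d(m) = \sum_{\lambda\vdash m}\ \prod_{j\ge 1}\frac{c_j^{\,m_j(\lambda)}}{m_j(\lambda)!},
\]
with $m_j(\lambda)$ the number of parts of $\lambda$ equal to $j$. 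This reduces the problem to the behaviour of the $c_j$ together with the combinatorics of the product of parts $\Pi(\lambda):=\prod_j j^{\,m_j(\lambda)}$.

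Next I would pin down the large-$d$ asymptotics. Since the largest divisor of $j$ is $j$ itself, $c_j = j^{d-1}\bigl(1+o(1)\bigr)$ as $d\to\infty$, so each partition term satisfies $\prod_j c_j^{\,m_j(\lambda)} = \Pi(\lambda)^{\,d-1}\bigl(1+o(1)\bigr)$, and the sum is governed by the partitions maximizing the product of parts. Writing $P(m):=\max_{\lambda\vdash m}\Pi(\lambda)$ and $C_m:=\sum_{\lambda\vdash m,\ \Pi(\lambda)=P(m)}\prod_j\bigl(m_j(\lambda)!\bigr)^{-1}>0$, the aim is to prove
\[
p_d(m) = C_m\,P(m)^{\,d-1}\,\bigl(1+o(1)\bigr)\qquad(d\to\infty).
\]
The maximizing partitions are classical: up to the correction forced by $m\bmod 3$ they consist entirely of $3$'s, giving (for $t\ge 2$) the unique optimizer $3^{t}$ with $P(3t)=3^{t}$, while $P(3t-1)=2\cdot 3^{t-1}$ and $P(3t+1)=4\cdot 3^{t-1}$ (the latter attained by both $3^{t-1}4$ and $3^{t-1}2^2$, both of which are collected in $C_{3t+1}$).

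The conclusion then rests on a single strict inequality. For $n=3t$ with $t\ge 2$,
\[
P(3t)^2 = 9^{t}, \qquad P(3t-1)\,P(3t+1) = 8\cdot 9^{\,t-1},
\]
so $P(3t)^2=\tfrac{9}{8}\,P(3t-1)P(3t+1)$. Substituting the asymptotics gives
\[
\Delta_d(3t) = C_{3t}^2\,(9^{t})^{d-1}\bigl(1+o(1)\bigr) - C_{3t-1}C_{3t+1}\,\bigl(8\cdot 9^{\,t-1}\bigr)^{d-1}\bigl(1+o(1)\bigr),
\]
and factoring out $(9^{t})^{d-1}$ leaves $C_{3t}^2 + o(1) - C_{3t-1}C_{3t+1}(8/9)^{d-1}(1+o(1))$, which tends to $C_{3t}^2>0$. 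Hence $\Delta_d(3t)>0$ for all sufficiently large $d$, i.e.\ for almost all $d$.

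The main obstacle is making the asymptotic $p_d(m)=C_m P(m)^{d-1}(1+o(1))$ fully rigorous: I must bound the contribution of every non-maximizing partition (finitely many for fixed $m$, each of strictly smaller exponential order $\Pi(\lambda)^{d-1}$) and control the $o(1)$ in $c_j=j^{d-1}(1+o(1))$ uniformly over the parts that occur, so as to extract the correct positive leading constant $C_m$. Combinatorially, the heart of the matter is the determination of $P$ and the strict inequality $P(3t)^2>P(3t-1)P(3t+1)$, which is precisely log-concavity of the maximal-product function at multiples of $3$: the optimal partition of a multiple of $3$ is all $3$'s, and $3\cdot 3=9$ strictly beats the $2\cdot 4 = 2\cdot 2\cdot 2 = 8$ into which the neighbours $3t\pm 1$ are forced. (For $n$ coprime to $3$ the same comparison reverses, $16<18$, which is the mechanism behind the log-convexity recorded in Theorem~\ref{EINS}.)
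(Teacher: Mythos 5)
Your proposal is correct and follows essentially the same route as the paper: your partition expansion of $\exp\bigl(\sum_{j}\sigma_d(j)q^j/j\bigr)$ is exactly the Kostant formula the paper starts from, your maximal-product function $P(m)$ is precisely the content of Lemma~\ref{known}, and the decisive comparison $P(3t)^2=9^t>8\cdot 9^{t-1}=P(3t-1)P(3t+1)$ is the same $9/8$ mechanism that drives the paper's Case 1. The only real difference is presentational: you conclude via a fixed-$n$, $d\to\infty$ asymptotic $p_d(n)=C_n P(n)^{d-1}(1+o(1))$, whereas the paper replaces the $o(1)$ terms by explicit two-sided bounds (lower bound from the maximizing partitions alone, upper bound $\leq P(n)^{d-1}p_1(n)$), which yields the effective threshold $C_0(n)$ of Theorem~\ref{verbesserung} rather than only the \emph{almost all $d$} statement.
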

\begin{remarks}\ \\ 
Let $n \geq 6$ be divisible by $3$. Nicolas states that
partitions $p(n)$ are not log-concave if and only
if $n=9, 15$,
and $n=21$ (the case $d=1$).
Ono, Pujahari, and Rolen's result states, that plane partitions
$\func{pp}\left( n\right) $ are
not log-concave if and only
if $n=9$ (the case $d=2$).
Further, for $n \leq 6$ we have only $\Delta_3(3)< 0$.
Moreover, it is much more likely that $\Delta_d(n)>0$ for $n \geq 6$, divisible by $3$ and $d \geq 3$.
For each $n\geq 6$ not divisible by $3$,
we have an explicit bound $C(n)$,
such that $d \geq C(n)$
implies $\Delta_d(n)<0$.
\end{remarks}

\begin{theorem} \label{th12}
Let $n \geq 6$. Let $n \equiv 1$ or $n \equiv 2
\mod {3}$. 
Then $\Delta_d(n) < 0$ for almost all $d$. In particular
let
\begin{eqnarray*}
\tilde{C}_1(n)   &:= & 1 +  6\, \big( 1 + \ln (n-1)          \big) \,  n,  \\
\tilde{C}_2(n)  &:=  & 1 + 3\, \big(
3 + \ln (n+1) \big) \, n.
\end{eqnarray*}
Then for $r=1$ or $r=2$,
$\Delta_d(n)<0$ for $n \equiv r \mod {3}$ and $d \geq \tilde{C}_r(n)$.
\end{theorem}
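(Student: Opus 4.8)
The plan is to work directly with the explicit expansion of $p_d(n)$ as a sum over partitions. Expanding each factor $(1-q^m)^{-m^{d-1}}=\sum_{j\ge 0}\binom{m^{d-1}+j-1}{j}q^{mj}$ and collecting the coefficient of $q^n$ gives
\begin{equation*}
p_d(n)=\sum_{\lambda\vdash n}\ \prod_{m\ge 1}\binom{m^{d-1}+\lambda_m-1}{\lambda_m},
\end{equation*}
where $\lambda_m$ is the number of parts of $\lambda$ equal to $m$. Since $\binom{m^{d-1}+\lambda_m-1}{\lambda_m}=\frac{(m^{d-1})^{\lambda_m}}{\lambda_m!}\bigl(1+O(m^{-(d-1)})\bigr)$, each summand behaves like $\bigl(\prod_m\lambda_m!\bigr)^{-1}\pi(\lambda)^{d-1}$ as $d\to\infty$, where $\pi(\lambda)=\prod_m m^{\lambda_m}$ is the product of the parts. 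Hence the growth of $p_d(n)$ is governed by $P(n):=\max_{\lambda\vdash n}\pi(\lambda)$ and by $L(n):=\sum_{\pi(\lambda)=P(n)}\bigl(\prod_m\lambda_m!\bigr)^{-1}$. First I would record the elementary maximisation: a product-maximising partition uses only parts $2,3,4$, and the maximisers are $3^{n/3}$ if $n\equiv 0$; the two partitions $3^{(n-4)/3}2^2$ and $3^{(n-4)/3}4$ if $n\equiv 1$ (the interchange $2+2\leftrightarrow 4$ being the one source of multiplicity); and $3^{(n-2)/3}2$ if $n\equiv 2\pmod 3$. This gives closed forms for $P$ and $L$ in each residue class.

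The inequality $\Delta_d(n)<0$ is $p_d(n-1)\,p_d(n+1)>p_d(n)^2$, and after factoring out $P(n)^{2(d-1)}$ the whole argument hinges on two arithmetic identities for the maximal products: for $n\equiv 1\pmod 3$ one has $P(n-1)P(n+1)=\tfrac98\,P(n)^2$, whereas for $n\equiv 2\pmod 3$ one has $P(n-1)P(n+1)=P(n)^2$. I would therefore split along these two cases.

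For $n\equiv 1$ the product gap $\tfrac98>1$ does all the work. Bounding $p_d(n\pm1)\ge L(n\pm1)\,P(n\pm1)^{d-1}$ (dropping the positive subdominant terms, which is legitimate since every summand majorises its leading monomial) and $p_d(n)\le P(n)^{d-1}\bigl(L(n)+E_n(d)\bigr)$ with $E_n(d)$ an explicit geometric tail controlled by the ratio of the second-largest to the largest product, the claim reduces to $L(n-1)L(n+1)(9/8)^{d-1}>\bigl(L(n)+E_n(d)\bigr)^2$. Taking logarithms and solving for $d$ after dominating the combinatorial factors by a power of $n-1$ produces the threshold $\tilde C_1(n)=1+6\bigl(1+\ln(n-1)\bigr)n$; the $\ln(n-1)$ is exactly what emerges from this exponential-beats-polynomial step.

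The case $n\equiv 2\pmod 3$ is the \emph{crux}, because the leading powers now coincide and the sign of $\Delta_d(n)$ is decided by the coefficients. A short computation gives $L(n)^2/\bigl(L(n-1)L(n+1)\bigr)=\tfrac{2(k+1)}{3k}$ with $k=(n-2)/3$, which is $<1$ precisely when $k\ge 3$, i.e. $n\ge 11$; there the constant gap $L(n-1)L(n+1)-L(n)^2>0$ survives and need only be protected against the subdominant tails (of relative size $(8/9)^{d-1}$, arising from the replacement $3+3\to 2+2+2$), again via an exponential-beats-constant inequality whose solution is $\tilde C_2(n)=1+3\bigl(3+\ln(n+1)\bigr)n$. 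The genuine obstacle is the single degenerate value $n=8$ ($k=2$), where both $P(n-1)P(n+1)=P(n)^2$ and $L(n-1)L(n+1)=L(n)^2$, so $\Delta_d(8)$ is determined only at the next order; I would resolve it by expanding $p_d(7),p_d(8),p_d(9)$ one step further, extracting the coefficient of relative order $(8/9)^{d-1}$, and verifying it has the correct negative sign, after which the same tail estimate closes the case within the bound $\tilde C_2(8)$.
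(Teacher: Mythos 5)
Your proposal is correct and takes essentially the same route as the paper: your binomial-coefficient expansion is equivalent to the Kostant formula the paper uses, and you then deploy the identical machinery --- the product-maximisation lemma with its residue-class closed forms, the second-largest product $16\cdot 3^{(n-8)/3}$ for $n\equiv 2\pmod{3}$, the dichotomy between the exponential gap $9/8$ for $n\equiv 1$ and the coefficient ratio $\frac{2(k+1)}{3k}<1$ (for $n\ge 11$) for $n\equiv 2$, and the next-order resolution of the doubly degenerate case $n=8$. The differences (notation, and your slightly more explicit treatment of the geometric error terms $E_n(d)$) are cosmetic, and your observation that the analytic next-order argument at $n=8$ suffices below the threshold $\tilde C_2(8)$ is consistent with the paper's bound $d\ge 103$ there.
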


In this paper we
study
the $n$ and $d$ aspects of $p_d(n)$.
For the rest of this section
we consider $p_d(n)$ as a double indexed sequence. Table \ref{landscape}
gives a
decent description of the underlying landscape.

Note that the results of column $d=1$ are due to Nicolas \cite{Ni78}.
He proved that for $n \geq 26$, there are no further exceptions.
Column $d=2$ was conjectured by Heim, Neuhauser, and
Tr\"oger \cite{HNT21} and proved by Ono, Pujahari, and Rolen \cite{OPR22}.
They proved that for $n \geq 12$, there are no further exceptions.
We also increased the values of the parameters $d$ and $n$.

\begin{conjecture}
Let $n\geq 6$ and $n \equiv 0 \mod {3}$. Then we expect
\begin{equation*}
\Delta_d(n) >0 \Longrightarrow \Delta_{d+1}(n)>0.
\end{equation*}
\end{conjecture}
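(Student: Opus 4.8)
The plan is to regard $n$ as fixed and to study $p_d(n)$ as an exponential sum in the single variable $x:=d-1$. Expanding the infinite product, $p_d(n)=\sum_{\lambda\vdash n}\prod_{m\ge 1}\binom{m^{d-1}+k_m(\lambda)-1}{k_m(\lambda)}$, where $k_m(\lambda)$ is the number of parts of $\lambda$ equal to $m$. Each factor $\binom{\alpha+k-1}{k}=\tfrac1{k!}\,\alpha(\alpha+1)\cdots(\alpha+k-1)$ is a polynomial in $\alpha$ with nonnegative coefficients, so substituting $\alpha=m^{d-1}$ and collecting the monomials $\prod_m (m^{d-1})^{j_m}=\big(\prod_m m^{j_m}\big)^{d-1}$ by their base $b=\prod_m m^{j_m}$ yields
\begin{equation*}
p_d(n)=\sum_{b} c_b(n)\,b^{\,x},\qquad c_b(n)\ge 0,\quad x=d-1 .
\end{equation*}
Writing $f_{n'}(x):=\sum_b c_b(n')\,b^x$ for $n'\in\{n-1,n,n+1\}$, the quantity $\Delta_d(n)$ becomes an exponential sum $g(x):=f_n(x)^2-f_{n-1}(x)\,f_{n+1}(x)=\sum_B e_B\,B^{\,x}$ with real coefficients $e_B$ and positive integer bases $B$. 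Since the conjectured implication only concerns integers, it suffices to prove the stronger real-variable statement that $\{x:g(x)>0\}$ is an up-ray; its trace on $x=d-1\in\{0,1,2,\dots\}$ is then automatically upward closed.

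The second step is to locate the leading base and to see why divisibility by $3$ is decisive. The largest base occurring in $f_{n'}$ is $M(n'):=\max_{\lambda\vdash n'}\prod(\text{parts of }\lambda)$, the classical maximum-product partition value, carried by a strictly positive coefficient $c_{M(n')}(n')>0$. For $n\ge 6$ divisible by $3$ one has $M(n)=3^{n/3}$ (all threes), while $M(n-1)=2\cdot 3^{n/3-1}$ and $M(n+1)=4\cdot 3^{n/3-1}$. Hence
\begin{equation*}
M(n)^2=3^{2n/3}>\tfrac{8}{9}\,3^{2n/3}=M(n-1)\,M(n+1),
\end{equation*}
so the top base of $g$ is $3^{2n/3}$, contributed by $f_n^2$ alone, and its coefficient $c_{M(n)}(n)^2$ is positive. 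This already reproves the positivity of $\Delta_d(n)$ for large $d$ (Theorem~\ref{th03}) and fixes the sign of $g$ at $+\infty$.

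To upgrade eventual positivity to the monotone up-set property I would invoke the generalized Descartes rule of signs for exponential sums (P\'olya--Szeg\H{o}): a real exponential sum $\sum_B e_B B^{x}$ with distinct positive bases has at most as many real zeros as there are sign changes in the coefficient sequence $(e_B)$ listed by increasing base. The whole statement thus reduces to the combinatorial assertion that this sequence has \emph{exactly one} sign change, negative for small bases and positive for large ones. Together with $e_{3^{2n/3}}>0$ this forces $g$ to have at most one real zero and to be positive beyond it, which is precisely $\Delta_d(n)>0\Rightarrow\Delta_{d+1}(n)>0$.

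The main obstacle is exactly this sign pattern. The extreme coefficients are controlled, but the intermediate ones are differences of positive quantities: for instance the second-largest base of $f_n$ is $\tfrac{8}{9}3^{n/3}$, realized by partitions of type $(4,2,3^{\,n/3-2})$, so the base $\tfrac{8}{9}3^{2n/3}$ receives a positive contribution from $f_n^2$ on top of the negative one from $f_{n-1}f_{n+1}$, and its sign is not evident a priori. Establishing the single sign change therefore seems to require either sharp lower bounds on the $c_b(n)$ near the top of the spectrum or, more attractively, a base-monotone injection from pairs $(\lambda,\mu)$ with $\lambda\vdash n-1$, $\mu\vdash n+1$ into pairs $(\sigma,\tau)$ with $\sigma,\tau\vdash n$ that dominates $f_{n-1}f_{n+1}$ by $f_n^2$ coefficientwise above a threshold base while reversing the inequality below it. Producing such a correspondence uniformly in $n$ is the crux, and is the reason the statement remains only conjectural.
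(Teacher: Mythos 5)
Your proposal cannot be measured against a proof in the paper, because there is none: the statement is one of the paper's \emph{Observations}, i.e.\ an explicitly conjectural expectation ("we expect") supported only by the computations recorded in Table~\ref{landscape}. So the proposal stands or falls on its own, and by your own closing admission it does not close the question.

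The parts you carry out are sound and in fact parallel the paper's tools. Writing $p_d(n)=\sum_b c_b(n)\,b^{d-1}$ with $c_b(n)\geq 0$ is exactly the structure the paper obtains from the Kostant formula and makes explicit for $n\leq 8$ in Lemma~\ref{pdn}; identifying the largest bases with maximal products of parts is Lemma~\ref{known}; and your inequality $M(n)^2=3^{2n/3}>\frac{8}{9}3^{2n/3}=M(n-1)M(n+1)$ for $3\mid n$ is precisely the mechanism of Case 1 of Theorem~\ref{verbesserung}, so that portion re-derives Theorem~\ref{th03}. The genuine gap is the single step on which everything else hangs: the assertion that the coefficients $e_B$ of $g(x)=f_n(x)^2-f_{n-1}(x)f_{n+1}(x)$, listed by increasing base, change sign exactly once. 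Granted that, the P\'olya--Szeg\H{o} rule does give at most one (necessarily simple) real zero, hence the up-ray property, which restricts correctly to integer $d$. But you offer no argument for the single sign change, and it is not a routine verification: already at the base $\frac{8}{9}3^{2n/3}$ the coefficient is a difference of positive quantities (cross terms of $f_n^2$ involving the second-largest base $\frac{8}{9}3^{n/3}$ of $f_n$, against the product of the top coefficients of $f_{n-1}$ and $f_{n+1}$), and further down the spectrum the $c_b$ are only accessible through case-by-case computations of the kind the paper performs in Lemma~\ref{pdn} (note also that $e_1=0$, since $c_1(n')=1$ for every $n'$, so the sign pattern at the bottom is degenerate). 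In effect you have reduced one conjecture to another: a clean and potentially useful reduction --- arguably stronger than needed, since you ask for an up-ray over real $x$ while the Observation concerns only integers --- but the reduced statement (one sign change, uniformly in $n$) is of at least the same depth, and neither you nor the paper proves it.
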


\begin{conjecture}
Moreover, let $
{D}_{n}:= \min \left\{ d > 3\, : \, \Delta_d(n) <0 \right\} $.
Let $n \geq 6$ and $n \equiv 1 \mod{3}$. Then Table~\ref{landscape} indicates
$
{D}_{n} >
{D}_{n+1}$.
\end{conjecture}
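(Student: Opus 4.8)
The plan is to pass from the product to an exact closed form for $p_d(n)$ as a function of $d$ and to read the thresholds $D_n,D_{n+1}$ off a balance of exponentials. Expanding $\prod_{k\ge 1}(1-q^k)^{-k^{d-1}}$ factor by factor and extracting the coefficient of $q^n$, each binomial $\binom{k^{d-1}+j-1}{j}$ is a polynomial in $k^{d-1}$, so for every fixed $n$ one gets a finite expansion
\[
p_d(n)=\sum_{b}\gamma_{n,b}\,b^{\,d-1},\qquad \gamma_{n,b}\in\Q,
\]
where $b$ runs over the integers $\prod_k k^{i_k}$ arising from partitions of $n$, the largest base is $N_{\max}(n)$ (the maximal product of parts of a partition of $n$), and the top coefficient is $\gamma_n^{\mathrm{top}}=\sum_\lambda 1/\prod_k m_k(\lambda)!$, summed over the $\lambda\vdash n$ attaining $N_{\max}(n)$, with $m_k(\lambda)$ the number of parts equal to $k$. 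Since both $n$ and $n+1$ are coprime to $3$, Theorem~\ref{th12} already guarantees that $D_n$ and $D_{n+1}$ are finite, so the game is only to compare them.

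First I would compute bases and top coefficients by residue: $N_{\max}(m)=3^{m/3},\,4\cdot 3^{(m-4)/3},\,2\cdot 3^{(m-2)/3}$ for $m\equiv 0,1,2\pmod 3$, with $\gamma_m^{\mathrm{top}}=1/(m/3)!,\ \tfrac32/((m-4)/3)!,\ 1/((m-2)/3)!$ respectively (the factor $\tfrac32$ for $m\equiv1$ because both $\{4\}$ and $\{2,2\}$ realize the extra product $4$). Writing $a=(n-4)/3$ and substituting into $\Delta_d(n)$ and $\Delta_d(n+1)$ with $n\equiv 1\pmod 3$, one finds in both cases that the two leading bases differ by exactly the factor $9/8$ — corresponding to replacing a block $3^2=9$ by $2^3=8$, which also governs the second-largest product $\tfrac89 N_{\max}(m)$ in every residue class. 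Hence each threshold is located by an equation of the shape $(9/8)^{d-1}\approx(\text{amplitude ratio})$, and both $D_n$ and $D_{n+1}$ grow like $\tfrac{2}{\ln(9/8)}\ln n$.

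The decisive difference lives in the amplitudes. For $n\equiv1$ the negative driver sits at the base $N_{\max}(n-1)N_{\max}(n+1)=\tfrac98 N_{\max}(n)^2$ while the positive defender sits at $N_{\max}(n)^2$ with the robust coefficient $(\gamma_n^{\mathrm{top}})^2=\tfrac94/(a!)^2$ (the $\tfrac32$ boost, squared); the crossing therefore obeys $(9/8)^{D_n-1}\approx\tfrac94(a+1)^2$, which is large. For $n+1\equiv2$ one has the exact coincidence $N_{\max}(n+1)^2=N_{\max}(n)N_{\max}(n+2)$, so the leading bases cancel and the net top coefficient collapses to $(\gamma_{n+1}^{\mathrm{top}})^2-\gamma_n^{\mathrm{top}}\gamma_{n+2}^{\mathrm{top}}=\tfrac{1-a}{2\,(a!)^2(a+1)^2(a+2)}$, which is negative precisely for $a\ge 2$, i.e.\ $n\ge 10$ (and vanishes at $a=1$, i.e.\ $n=7$). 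Because this top coefficient is of strictly lower order, the positive second base (again a $9/8$ gap) is overtaken sooner, and the comparison $D_n>D_{n+1}$ reduces to pitting the residue-$1$ amplitude $\tfrac94(a+1)^2$ against the residue-$2$ amplitude ratio $c_2/|c_{\mathrm{top}}|$.

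The main obstacle is exactly this last inequality: the leading-order asymptotics of $D_n$ and $D_{n+1}$ coincide, so the \emph{strict} inequality is decided only at the additive $O(1)$ level, which forces one to compute the second-order coefficient $c_2$ exactly — and here the near-cancellation at the top base makes the bookkeeping delicate, requiring the coefficients $\gamma_m^{(2)}$ of the second-largest product $\tfrac89 N_{\max}(m)$ for $m=n,n+1,n+2$. I would finish by carrying out this coefficient count for $n\ge 10$, and by reading the base case $n=7$ (where $a=1$ annihilates the top coefficient and a further order is needed) directly off Theorem~\ref{EINS} and Table~\ref{landscape}, which give $D_7=11>9=D_8$. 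Because of the residual dependence on these lower-order constants, the statement is appropriately recorded as an observation, rigorously anchored by the finite data and by the leading asymptotic mechanism above.
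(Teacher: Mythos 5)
You should first be aware that the paper contains \emph{no proof} of this statement: it is deliberately recorded as an Observation, supported only by the numerical data in Table~\ref{landscape}, so there is no argument of the authors to match yours against; the only question is whether your attempt actually closes the gap, and it does not. To your credit, the leading-order mechanism you describe is correct and fully consistent with the machinery of Section~3: the expansion $p_d(m)=\sum_b\gamma_{m,b}\,b^{d-1}$ follows from Kostant's formula since $\sigma_d(m)/m=\sum_{t\mid m}t^{d-1}\cdot(t/m)$, your top coefficients agree with the paper's lower bounds (e.g.\ $\tfrac{3}{2}/((m-4)/3)!$ for $m\equiv 1$), the identity $N_{\max}(n+1)^2=N_{\max}(n)N_{\max}(n+2)$ for $n\equiv 1\bmod 3$ is exact, and your net top coefficient $\frac{1-a}{2(a!)^2(a+1)^2(a+2)}$ checks out, vanishing at $n+1=8$ in pleasing consonance with the paper's separate treatment of $n=8$ in Case~4 of Theorem~\ref{verbesserung}.

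Nevertheless there are genuine gaps that your sketch does not close. First, the threshold $D_m$ is the first sign change of a full finite exponential sum, and you never rule out multiple sign changes in $d$; that $\Delta_d(m)$ crosses zero only once beyond $d>3$ is itself only conjectural (it is essentially the content of the companion Observation~1), so the ``crossing equation'' $(9/8)^{D_n-1}\approx\tfrac94(a+1)^2$ does not define $D_n$. Second, your two-term balance omits contributions at base exactly $N_{\max}(n)^2$ inside $p_d(n-1)p_d(n+1)$, obtained by pairing the maximal product for $n-1$ with the second-largest product for $n+1$ (Lemma~\ref{zweitgroesstes}) and vice versa, since $\tfrac89\cdot\tfrac98=1$; these erode the ``robust'' coefficient $\tfrac{9}{4(a!)^2}$, and the data show the effect is large: your balance predicts $D_7\approx 20$ and $D_{10}\approx 26$, while the true values are $11$ and $16$. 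Consequently your claim that both thresholds share the leading asymptotic $\tfrac{2}{\ln(9/8)}\ln n$, and hence that the strict inequality is decided at the additive $O(1)$ level, is unsubstantiated---over the table's range the $D_m$ in fact grow roughly linearly in $m$. Finally, the concluding appeal to Theorem~\ref{EINS} and Table~\ref{landscape} as a ``rigorous anchor'' is circular, since the statement to be proved \emph{is} the pattern read off that table; your program (exact coefficients at the top few bases, a unique-sign-change lemma, then a threshold comparison uniform in $n$) is a sensible route, but none of its decisive steps is carried out, so the statement remains, as in the paper, an open observation.
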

\section{\label{abschnitt2}Ordinary and
plane partition functions}
According to Stanley, plane partitions are
fascinating generalizations of 
partitions of integers (\cite{St99}, Section 7.20). Andrews \cite{An98} gave an excellent introduction
of plane partitions in the context of higher-dimensional partitions. The most recent survey is presented
by Krattenthaler
on plane partitions in the work by
Stanley and his school
\cite{Kr16}. 

A plane partition $\pi$ of $n$ is an array 
$\pi = \left( \pi_{ij} \right)_{i,j \geq 1}$ of non-negative integers $\pi_{ij}$
with finite sum $\vert \pi \vert := \sum_{i,j\geq 1} \pi_{ij}=n$, which is weakly decreasing in rows and columns.

Plane partitions are also displayed by a filling of a Ferrers diagram with weakly decreasing rows and columns,
where the sum of all these numbers is equal to $n$.
Let the numbers in the filling represent the 
heights for stacks of blocks placed on each cell of the diagram (Figure \ref{cube}). 
\begin{figure}[H]
\begin{minipage}{0.35\textwidth}
\begin{equation*}
\phantom{xxx}
\young(5443321,432,21)
\end{equation*}
\end{minipage}
$\longrightarrow$ \phantom{xx}
\begin{minipage}{0.4\textwidth}\phantom{xx}
\includegraphics[width=0.75\textwidth]{./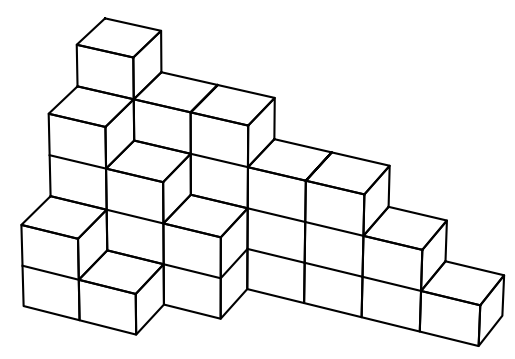}
\end{minipage}
\caption{\label{cube}Representation of plane partitions.}
\end{figure}

Let ${p}(n)$ denote the number of partitions of
$n$ and $\func{pp}(n)$ denote
the number of plane partitions of $n$.
As usual, ${p}\left( 0\right):=1$ and $\func{pp} \left(0 \right): =1$.
In Table \ref{partition} we have listed the first values
of the partition and plane partition function, which are equal to $p_1(n)$ and $p_2(n)$ and related to $d=1$ and $d=2$.
There had been speculations about the sequence $p_3(n)$. To indicate that this is still an open topic
we labeled $p_3(n)$ with $X_n$.
\begin{table}[H]
\[
\begin{array}{lrrrrrrrrrrr}
\hline
n & 0 & 1 & 2 & 3 & 4 & 5 & 6 & 7 & 8 & 9 & 10 \\ \hline \hline
p_1(n)=p\left( n \right) & 1 & 1 & 2 &
3 &
5 &
7 &
11 &
15 &
22 & 30 & 42 \\
p_2(n)=\func{pp} \left( n \right) & 1 & 1 & 3 &
6 &
13 &
24&
48 &
86 &
160 & 282 & 500 \\
p_3(n)= \,\, X_n \,\, & 1 & 1 &5&14&40&101&266&649&1593&3765&8813\\
\hline
\end{array}
\]
\caption{\label{partition}
Values for $ 0 \leq n \leq 10$.}
\end{table}
In the context of higher partitions \cite{An98} one has the so-called solid partitions
as certain  $3$-dimensional arrays. These are different from $p_3(n)$. 
MacMahon proposed possible generating
series, which
did not work out finally.

\section{Proofs of Theorem \ref{th03} and Theorem \ref{th12}}

We
divide
the proof into several parts.
Let $\sigma_{d}(n):= \sum_{
t \mid n}
t^d$.
Our strategy is to utilize the following formula (\cite{Ko04}, section 4.7):
\begin{equation*}
p_{d}(n) =\sum _{k\leq n}\sum _{\substack{m_{1},\ldots ,m_{k}\geq 1 \\ m_{1}+\ldots +m_{k}=n}}\frac{1}{k!}\frac{\sigma _{d}\left( m_{1}\right) \cdots \sigma _{d}\left( m_{k}\right) }{m_{1}\cdots m_{k}}.
\end{equation*}

\subsection{Partitions $n=\sum_{
j=1}^k m_{
j}$ 
and the maximum of $\prod_{
j=1}^k m_{
j}$}

We start with the following well known property.
For completeness, we
include
a proof.

\begin{lemma}\label{known}
\label{produkt}Let
$m_{1},\ldots ,m_{k}\geq 1$ and $m_{1}+\ldots +m_{k}=n\geq 2$. Then the largest
values for $m_{1}\cdots m_{k}$ are
\begin{eqnarray*}
\max _{k\leq n}\max _{\substack{m_{1},\ldots ,m_{k}\geq 1 \\ m_{1}+
\ldots +m_{k}=n}}m_{1}\cdots m_{k} &= &     3^{n/3}  \text{ for }  n      
\equiv 0\mod 3, \\
\max _{k\leq n}\max _{\substack{m_{1},\ldots ,m_{k}\geq 1 \\ m_{1}+
\ldots +m_{k}=n}}m_{1}\cdots m_{k} &= &          4\cdot 3^{\left( n-4\right) /3}  
\text{ for }  n   \equiv 1\mod 3,\\
\max _{k\leq n}\max _{\substack{m_{1},\ldots ,m_{k}\geq 1 \\ 
m_{1}+\ldots +m_{k}=n}}m_{1}\cdots m_{k} &=&      2\cdot 3^{\left( n-2\right) /3}  \text{ for } n\equiv 2\mod 3.
\end{eqnarray*}
\end{lemma}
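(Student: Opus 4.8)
The plan is to run the standard exchange (or ``smoothing'') argument that reduces any product-maximizing tuple to one using only parts equal to $2$ and $3$. Fix $n \geq 2$ and consider a tuple $m_1, \ldots, m_k$ with $\sum_j m_j = n$ attaining the maximum product; since the product is symmetric in the $m_j$ and the constraint depends only on their multiset, I may treat the tuple as a partition of $n$ and argue by exhibiting local replacements that never decrease the product.

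First I would eliminate parts equal to $1$: if some $m_i = 1$ and $k \geq 2$, replacing the two parts $m_i = 1$ and any $m_j$ by the single part $m_j + 1$ preserves the sum and multiplies the product by $(m_j+1)/m_j > 1$. Hence an optimal partition with more than one part has every $m_j \geq 2$. Next I would bound the parts from above: if some $m_i \geq 4$, replacing it by the two parts $2$ and $m_i - 2$ keeps the sum fixed and changes the product by the factor $2(m_i-2)/m_i \geq 1$, with equality exactly when $m_i = 4$. Thus there is an optimal partition with all parts in $\{2,3\}$. Finally I would cap the number of $2$'s: since $2+2+2 = 3+3$ while $2^3 = 8 < 9 = 3^2$, any three $2$'s can be traded for two $3$'s, strictly increasing the product, so at most two parts equal $2$.

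With these reductions the optimal partition is forced by $n \bmod 3$. Writing $n = 3a + b$ with $b$ determined by the residue and at most two $2$'s allowed, I obtain all $3$'s when $n \equiv 0$, one $2$ and the rest $3$'s when $n \equiv 2$, and two $2$'s (equivalently a single $4 = 2\cdot 2$) together with the rest $3$'s when $n \equiv 1$. Evaluating the products yields $3^{n/3}$, $2 \cdot 3^{(n-2)/3}$, and $4 \cdot 3^{(n-4)/3}$ respectively, matching the claim.

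There is no serious obstacle here; the only points needing care are the boundary cases. The merging step requires $k \geq 2$, so the single-part partition, whose product is just $n$, must be handled separately (it is optimal or ties the formula for $n \in \{2,3,4\}$ and is strictly dominated for $n \geq 5$). I would also verify the smallest instances $n = 2,3,4,5$ directly against the three formulas, since for $n \equiv 1 \bmod 3$ the extremal configuration first becomes available only at $n = 4$.
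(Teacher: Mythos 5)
Your proposal is correct and takes essentially the same exchange (smoothing) argument as the paper's own proof: eliminate parts equal to $1$ by merging, split any part $\geq 4$ as $2+(m_i-2)$, and cap the number of $2$'s at two via $2^3<3^2$, then read off the extremal configuration from $n \bmod 3$. The only cosmetic difference is that the paper keeps parts in $\{2,3,4\}$ and treats ``one $4$'' and ``two $2$'s'' as tied configurations, whereas you normalize every $4$ into $2+2$; your explicit handling of the $k=1$ and small-$n$ boundary cases is a minor point of extra care, not a different route.
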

\begin{proof}
Let $k\leq n$ and $m_{1},\ldots ,m_{k}\geq 1$ so
that
$m_{1}+\ldots +m_{k}=n$.
We can assume that $m_{j}\geq 2$ for all $j\leq k$.
Otherwise, if $m_{j}=1$, we can
take the partition with
$m_{j-1}$ replaced by $m_{j-1}+1$, or $m_{j+1}$ replaced by
$m_{j+1}+1$ and drop $m_{j}$ and obtain a larger product.
We can also assume, that $m_{j}\leq 4$ for all $j\leq k$.
Otherwise, if $m_{j}
>4$, we split it as
$m_{j}=2+\left( m_{j}-2\right) $ and
$2\left( m_{j}-2\right)
>m_{j}
$, resulting in a larger
product.

Therefore,
if $m_{1}\cdots m_{k}$ is maximal, then it follows
that
$2\leq m_{j}\leq 4$ for all $j\leq k$. Assume there are at
least three $m_{j}=2$. But $2+2+2=6=3+3$ and $2^{3}<3^{2}$.
So,
if we replace the three $2$s by two $3$s we obtain a
larger product. Therefore,
in the maximal product there can be at most two $2$s. Similarly,
if there were to be one
$m_{j}=4$ and another $=2$,
we obtain $4+2=6=3+3$ and again
$4\cdot 2<3^{2}$. Again,
the product becomes larger if
$4$ and $2$ are replaced by two $3$s. Therefore,
in the maximal
product there can only be either one $4$ and no $2$,
or no $4$
if there is a $2$.

In total,
we obtain that in the maximal product we have only
$3$s except for either precisely one $4$ or at most two $2$s.
Note, that if there is either
one $4$ or two $2$s, it
results in the same product as $4=2^{2}$. If $n\equiv 0\mod 3$,
this yields
only one possibility, when $n=\frac{n}{
3}
\cdot 3$ resulting in
$3^{n/3}$. If $n\equiv 1\mod 3$,
we have two possible
partitions:
$n=4+\frac{n-4}{3}\cdot 3=2+2+\frac{n-4}{3}\cdot 3$
where both
result
in the
product
$4\cdot 3^{\left( n-4\right) /3}$. Note that we
assumed $n\geq 2$. If $n\equiv 2\mod 3$,
we
again have only one possible partition $n=2+\frac{n-2}{3}\cdot 3$
given the restrictions,
resulting in the product
$2\cdot 3^{\left( n-2\right) /3
}$.
\end{proof}

\begin{lemma}
\label{zweitgroesstes}For $n\geq 8$, $n\equiv 2\mod 3$
the second largest product
is $16\cdot 3^{\left( n-8\right) /3}$.
\end{lemma}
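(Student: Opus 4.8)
The plan is to reuse exactly the same optimization structure that proved Lemma~\ref{produkt}, but now looking for the \emph{runner-up} value among all partitions of $n$ into parts (products), under the constraint $n\equiv 2\bmod 3$. As established in Lemma~\ref{produkt}, any product-maximizing partition uses only parts equal to $2,3,4$, with at most two $2$s, and $4$ counting the same as $2+2$; for $n\equiv 2\bmod 3$ the unique optimal multiset of parts is one $2$ together with $(n-2)/3$ threes, giving the maximum $2\cdot 3^{(n-2)/3}$. So first I would argue that the second-largest product is again attained by some partition into parts from $\{2,3,4\}$, since any partition containing a part $\geq 5$ or a part $=1$ can be strictly improved by the same splitting/merging moves as before, and hence cannot be the runner-up once $n$ is large enough for the improved partition to still differ from the global maximizer.

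Next I would enumerate the near-optimal admissible multisets of $2$s, $3$s, $4$s summing to $n\equiv 2\bmod 3$ and compare their products to the optimum $M:=2\cdot 3^{(n-2)/3}$. The key arithmetic input is how each exchange changes the product relative to a block of $3$s. Replacing a $3$ by lower parts always loses, so the runner-up should come from trading threes for the ``next best'' configuration. The natural candidate is to take \emph{two} extra $2$s in place of a $3$-and-part-of-structure, i.e. the partition with two $2$s (equivalently a single $4$) and the remaining sum $n-4\equiv 1\bmod 3$ filled by threes — but $n-4\equiv 1$ forces a $4$ or $2+2$ there too, so the cleanest runner-up uses parts producing $16\cdot 3^{(n-8)/3}$, namely two $4$s (or four $2$s, same product) together with $(n-8)/3$ threes. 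I would verify this multiset is valid precisely when $n\geq 8$, and compute its product as $4^2\cdot 3^{(n-8)/3}=16\cdot 3^{(n-8)/3}$.

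The heart of the proof is then the comparison step: I must show $16\cdot 3^{(n-8)/3}$ beats every other non-optimal admissible product. Writing each candidate as $3^{(n-2)/3}$ times a correction factor, the optimum corresponds to factor $2$ and the proposed runner-up to $16/3^{2}=16/9$, so I need to check that no other admissible multiset yields a correction strictly between $16/9$ and $2$. Since the allowed deviations from ``all threes'' are governed by the finite exchange rules (a $3\to 2+1$ move is forbidden as $1$ is not allowed; a $3+3\to 2+4$ or $\to 2+2+2$ move multiplies by $8/9$; etc.), the set of achievable correction factors below $2$ is a small explicit finite list, and I would simply tabulate them for small local changes and confirm $16/9$ is the largest value strictly less than $2$. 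This reduces to comparing powers of $2$ and $3$, e.g. $16/9$ versus $14/9$, $12/9$, and the like.

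The main obstacle I anticipate is \emph{bookkeeping the boundary cases} for small $n$ (the threshold $n=8$, where $(n-8)/3=0$ and the runner-up is literally the product $16=4^2$ from the partition $4+4$ of $8$, versus the maximum $2\cdot 3^{2}=18$), and making rigorous the claim that no partition containing a large part can sneak in as the second-largest. The clean way to handle the latter is to prove a monotonicity/strict-improvement lemma: any partition with a part $\geq 5$ or a part $=1$ can be replaced by a strictly larger \emph{distinct} admissible partition, so it is dominated by at least two others and cannot be the runner-up. Once that structural reduction is in place, the comparison collapses to the finite check above, and the equality $16\cdot 3^{(n-8)/3}$ follows for all $n\geq 8$ with $n\equiv 2\bmod 3$.
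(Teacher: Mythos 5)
Your proposal is correct, and its skeleton matches the paper's proof: reduce to partitions with parts in $\{2,3,4\}$ via exchange moves, then identify the runner-up among those. The delicate point is the same in both: the standard moves (drop a $1$, split an $m\geq 5$ as $2+(m-2)$) can land exactly on the maximizer, so by themselves they do not disqualify a partition from being second largest. You correctly isolate this as a ``strict improvement to a partition distinct from the maximizer'' lemma; making it precise amounts to listing the partitions for which the standard move hits the maximum, namely $5+3+\cdots+3$, $1+1+3+\cdots+3$, and $1+2+2+3+\cdots+3$, and exhibiting for each a strictly better \emph{non-maximal} partition ($4+4+3+\cdots$, $1+4+3+\cdots$, $2+2+4+3+\cdots$, respectively); this is exactly where the hypothesis $n\geq 8$ is used, and it is exactly the content of the paper's first two cases. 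Where you genuinely diverge is the second half. The paper continues with exchange arguments (its remaining cases: two $4$s, three $2$s, or a $4$ and a $2$ together, each replaceable by $3$s, with equality to the maximum forced only from the $16\cdot 3^{(n-8)/3}$ configurations), whereas you parametrize all $\{2,3,4\}$-partitions algebraically: with $a$ the number of $2$s and $c$ the number of $4$s, the product is $2^{s}3^{(n-2s)/3}$ where $s=a+2c\equiv 1 \bmod 3$, and raising $s$ by $3$ multiplies the product by $8/9$, so the attainable values are totally ordered and the runner-up is $s=4$, giving $16\cdot 3^{(n-8)/3}$. This finish is cleaner and yields more (the complete ordering of products, not just the top two); the paper's case-by-case version instead identifies explicitly which partitions attain the second-largest value. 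One small inaccuracy in your sketch: the candidate correction factors ``$14/9$, $12/9$'' never occur; the achievable factors below $2$ are exactly $2^{s}3^{-2\left( s-1\right) /3}$ for $s\equiv 1\bmod 3$, $s\geq 4$, but this does not affect the argument.
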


\begin{proof}
In this
case
in the corresponding
partition $n=m_{1}+\ldots +m_{k}$ is
an $m_{j}=1$, an $m_{j}\geq 5$, more than one $m_{j}=4$,
at least three $m_{j}=2$, or a $4$ and a $2$ together.

First suppose there is an
$m_{j}=1$. Our argument is similar to that for
the proof of Lemma~\ref{known}
but in addition,
we have to ensure that the result is not maximal. We
replace either $m_{j-1}$ by $m_{j-1}+1$ or $m_{j+1}$ by
$m_{j+1}+1$ and drop the $m_{j}=1$. This can only result in
$n=2+\frac{n-2}{3}\cdot 3$ if either there were two $2$s in
the partition of $n$, or another $1$. If it was
$n=1+2+2+\frac{n-5}{3}\cdot 3$, then since
$n\geq 8$
we can change it to
$n=2+2+4+\frac{n-8}{3}\cdot 3$ and obtain a larger product.
In the second case, $n=1+1+\frac{n-2}{3}\cdot 3$.
Since $n\geq 8
$ we can
replace it by $n=1+4+\frac{n-5}{3}\cdot 3$ resulting in a
larger product.

In the second case we assume there is an $m_{j}\geq 5$. If we
again replace it by $2+\left( m_{j}-2\right) $ this will yield
the partition $n=2+\frac{n-2}{3}\cdot 3$, if and only if it was
$n=5+\frac{n-5}{3}\cdot 3$ before.
As $n\geq 8$ we can
increase this
to $n=4+4+\frac{n-8}{3}\cdot 3$,
since $4^{2}>5\cdot 3$.

In the third case we assume there
are more than one $4$. We
replace $4+4$ by $2+3+3$ and obtain a larger product
$2\cdot 3^{2}>4^{2}$. The product of the parts of
such a partition would only be maximal if the
partition was $n=4+4+\frac{n-8}{3}\cdot 3$.

The fourth case is that there are at least three $2$s. The product can be increased by
replacing $2+2+2=3+3$. This would
only be maximal if the partition was
$n=4\cdot 2+\frac{n-8}{3}\cdot 3$.

In the fifth case we assume that there is at least one $2$ and at least one $4$ together
in the partition. This could be increased by replacing $2+4=3+3$ as
$2\cdot 4<3^{2}$. This will yield the maximal product only if the partition was
$n=2+2+4+\frac{n-8}{3}$.

In total,
the
second largest product
$m_{1}\cdots m_{k}$ among all partitions
$\left( m_{1},\ldots ,m_{k}\right) $ of $n$
since
$n\geq 8$
is
\begin{equation}
16\cdot 3^{\left( n-8\right) /3}
\label{eq:2mod3}
\end{equation}
from the
partitions
$$n=4+4+\frac{n-8}{3}\cdot 3=2+2+4+\frac{n-8}{3}\cdot 3=4\cdot 2+\frac{n-8}{3}\cdot 3.$$
\end{proof}

\subsection{\label{schranken}Upper and
lower
bounds for $p_d(n)$}

Let $n\geq 2$ and
$d \geq 1
$. Then we have the inequalities
\begin{eqnarray*}
\frac{3^{\left( d-1\right) n/3}}{\left( n/3\right) !}
&<&p_{d}(n)\leq
3^{\left( d-1\right) n/3}p_{1}(n)\text{ for }n\equiv 0\mod 3,\\
\frac{3
\left( 4\cdot 3^{\left( n-4\right) /3}\right) ^{d-1}
}{2\left( \left( n-4\right) /3\right) !}
&<&p_{d}(n) \leq
\left( 4\cdot 3^{\left( n-4\right) /3}\right) ^{d-1}
p_{1}(n)\text{ for }n\equiv 1\mod 3,\\
\frac{\left( 2\cdot 3^{\left( n-2\right) /3}\right) ^{d-1}
}{\left( \left( n-2\right) /3\right) !}
&<&p_{d}(n) \leq
\left( 2\cdot 3^{\left( n-2\right) /3}\right) ^{d-1}p_{1}(n)\text{ for }n\equiv 2\mod 3.
\end{eqnarray*}
Moreover, let
$n \equiv 2 \mod 3$ and $
n\geq 8$.
Then 
\begin{equation} \label{improve}
p_{d}(n)\leq \frac{\left( 2\cdot 3^{\left( n-2\right) /3}\right) ^{d-1}}{\left( \left( n-2\right) /3\right) !}+
\left( 16\cdot 3^{\left( n-8\right) /3
}\right) ^{d-1}p_{1}(n).
\end{equation}

To show these we consider
\begin{eqnarray}
p_{d}\left( n\right)
&=&\sum _{k\leq n}\frac{1}{k!}\sum _{\substack{m_{1},\ldots ,m_{k}\geq 1 \\ m_{1}+
\ldots +m_{k}=n}}
\frac{\sigma _{d}\left( m_{1}\right) \cdots \sigma _{d}\left( m_{k}\right) }{m_{1}\cdots m_{k}}\nonumber \\
&=&\sum _{k\leq n}\frac{1}{k!}\sum _{\substack{m_{1},\ldots ,m_{k}\geq 1 \\ m_{1}+
\ldots +m_{k}=n}}
\frac{1}{m_{1}\cdots m_{k}}\left( \sum _{t_{1}\mid m_{1}}t_{1}^{d}\right) \cdots \left( \sum _{t_{k}\mid m_{k}}t_{k}^{d}\right) .\label{pdnalssumme}
\end{eqnarray}
For the upper bounds
we use the following estimate
\begin{eqnarray*}
p_{d}\left( n\right) &\leq &\sum _{k\leq n}\frac{1}{k!}\sum _{\substack{m_{1},\ldots ,m_{k}\geq 1 \\ m_{1}+
\ldots +m_{k}=n}}
\frac{1}{m_{1}\cdots m_{k}}\left( \sum _{t_{1}\mid m_{1}}t_{1}\right) \cdots \left( \sum _{t_{k}\mid m_{k}}t_{k}\right) \left( m_{1}\cdots m_{k}\right) ^{d-1}\\
&\leq
&p_{1}\left( n\right)
\max  _{k\leq n}\max _{\substack{m_{1},\ldots ,m_{k}\geq 1 \\ m_{1}+\ldots +m_{k}=n}}\left( m_{1}\cdots m_{k}\right) ^{d-1}.
\end{eqnarray*}
Using
Lemma~\ref{produkt} we obtain the upper bounds.
Similarly, we obtain the lower bounds by taking
only those
growth terms from
the previous sum (\ref{pdnalssumme}) where the
product is maximal. This we can obtain in the
following way. Let
$M=\max _{k\leq n}\max _{\substack{m_{1},\ldots ,m_{k}\geq 1 \\ m_{1}+\ldots +m_{k}=n}}m_{1}\cdots m_{k}$.
Continuing (\ref{pdnalssumme}) we then obtain
\[
p_{d}\left( n\right) \geq \sum _{k\leq n}\frac{1}{k!}\sum _{\substack{m_{1},\ldots ,m_{k}\geq 1 \\ m_{1}+\ldots +m_{k}=n \\ m_{1}\cdots m_{k}=M}}M^{d-1}.
\]
Note that for $n\equiv 1\mod 3$, we obtain the maximal value for $m_{1}=4$,
$m_{2}=\ldots =m_{\left( n-1\right) /3}=3$ and $m_{1}=m_{2}=2$,
$m_{3}=\ldots =m_{\left( n+2\right) /3}=3$, which contribute
$$\frac{3}{2\left( \left( n-4\right) /3\right) !}\left( 4\cdot 3^{\left( n-4\right) /3}\right) ^{d-1}$$
to the lower bound.

Now assume $n\equiv 2\mod 3$. Then we obtain
the improved upper bound (\ref{improve}). For this recall that
in the proof of Lemma~\ref{known}, we have shown that
$m_{1}\cdots m_{k}$ is maximal only if in the partition of
$n=m_{1}+\ldots +m_{k}$ there are only $3$s except for at most
either precisely one $4$ or at most two $2$s. Since presently
$n\equiv 2\mod 3$, the only possibility
is
$n=2+\frac{n-2}{3}\cdot 3$.

The rest follows using Lemma~\ref{zweitgroesstes}.

\subsection{\label{summary}Final
step in the
proofs of Theorem \ref{th03} and Theorem \ref{th12}.}

In this section
we prove
Theorem~\ref{th03} (first inequality
in Theorem~\ref{verbesserung}) and
Theorem~\ref{th12} (second and third inequality
in Theorem~\ref{verbesserung}).

\begin{thm12}
Let $n \geq 6$
be divisible by $3$. Then
$\Delta_d(n)>0$ for almost all $d$.
\end{thm12}

\begin{thm13}
Let $n \geq 6$. Let $n \equiv 1$ or $n \equiv 2
\mod {3}$.
Then $\Delta_d(n) < 0$ for almost all $d$. In particular
let
\begin{eqnarray*}
\tilde{C}_1(n)   &:= & 1 +  6\, \big( 1 + \ln (n-1)          \big) \,  n , \\
\tilde{C}_2(n)  &:=  & 1 + 3\, \big(
3 + \ln (n+1) \big) \, n.
\end{eqnarray*}
Then for $r=1$ or $r=2$, 
$\Delta_d(n)<0$ for $n \equiv r \mod {3}$ and $d \geq \tilde{C}_r(n)$.
\end{thm13}

Note that in Theorem~\ref{th12}, we can take the
least upper integer bound
$\left\lceil C\right\rceil $ of the constants
since $d$ is integer.
We show the following improvement of
Theorems~\ref{th03} and~\ref{th12}.

\begin{theorem}
\label{verbesserung}Let $n\geq 6$. Then:
\begin{eqnarray*}
\left( p_{d
}\left( n\right) \right) ^{2}&>&p_{d
}\left( n-1\right) p_{d
}\left( n+1\right) \text{ for }n\equiv 0\mod 3,\,d\geq C_{0}\left( n\right)
,\\
\left( p_{d
}\left( n\right) \right) ^{2}&<&p_{d
}\left( n-1\right) p_{d
}\left( n+1\right) \text{ for }n\equiv 1\mod 3,\, 
d \geq C_{1}\left( n\right)
,\\
\left( p_{d
}\left( n\right) \right) ^{2}&<&p_{d
}\left( n-1\right) p_{d
}\left( n+1\right) \text{ for }n\equiv 2\mod 3,
\, d\geq C_{2}\left( n\right)
\end{eqnarray*}
with
\begin{eqnarray*}
C_{0}\left( n\right) & := & 1+2\frac{
\ln \left( 2
\right) +\ln \left(
n                                                 /3\right) /3
}{\ln \left( 9/8\right) }n,\\
C_{1}\left( n\right) & := &    1+2\frac{
\ln \left( 2
\right) +\ln \left( \left( n-1\right) /3\right) /3
}{\ln \left( 9/8\right) }n,                     \\
C_{2}\left( n\right) & := &  1+\frac{\ln \left(
3
\right) +
\ln \left(
\left( n+1\right) /3\right) /3
}{\ln \left( 9/8\right) }
n.  
\end{eqnarray*}
\end{theorem}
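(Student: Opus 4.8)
The plan is to read off the exponential growth rate of each factor from the bounds in Section~\ref{schranken} and to decide the sign of $\Delta_d(n)$ by comparing the dominant contributions on the two sides. Writing $M(\nu)$ for the maximal product in Lemma~\ref{produkt}, the bounds show that $p_d(\nu)$ grows like $M(\nu)^{d-1}$, so for large $d$ the sign of $\Delta_d(n)$ is governed by the ratio $M(n)^2/\big(M(n-1)M(n+1)\big)$. A direct computation from Lemma~\ref{produkt} gives this ratio as $9/8$, $8/9$, and $1$ according as $n\equiv 0,1,2\pmod 3$. The first two values already fix the sign for $d$ large, which will produce $C_0(n)$ and $C_1(n)$; the third is a tie, and it is there that Lemma~\ref{zweitgroesstes} and the refined bound \eqref{improve} must enter.

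For $n\equiv 0\pmod 3$ I would lower-bound $p_d(n)^2$ by the squared lower bound $3^{2(d-1)n/3}/((n/3)!)^2$ and upper-bound $p_d(n-1)p_d(n+1)$ by the product of the two upper bounds; since $n-1\equiv2$ and $n+1\equiv1$, their growth bases multiply to $(8/9)\,3^{2n/3}$. Cancelling the common factor $3^{2(d-1)n/3}$ reduces $\Delta_d(n)>0$ to $(9/8)^{d-1}\ge ((n/3)!)^2\,p_1(n-1)p_1(n+1)$. Taking logarithms and inserting $\ln(k!)\le k\ln k$ together with the crude partition bound $p_1(k)\le 2^{k-1}$ turns the right-hand side into $(2n/3)\ln(n/3)+2n\ln 2$, which is exactly $(C_0(n)-1)\ln(9/8)$; hence $d\ge C_0(n)$ suffices. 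The case $n\equiv1\pmod 3$ is the mirror image: I would upper-bound $p_d(n)^2$ and lower-bound $p_d(n-1)p_d(n+1)$, reduce to $(9/8)^{d-1}>(((n-1)/3)!)^2\,p_1(n)^2$, and apply the same two estimates to obtain $C_1(n)$.

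The crux is $n\equiv 2\pmod 3$, where the leading terms coincide and a naive comparison of first terms cancels identically. Here I would use the improved upper bound \eqref{improve}, $p_d(n)\le a/m!+b$ with $m=(n-2)/3$, $a=M(n)^{d-1}=(2\cdot3^{(n-2)/3})^{d-1}$ and $b=(16\cdot3^{(n-8)/3})^{d-1}p_1(n)$, together with the refined lower bound $p_d(n-1)p_d(n+1)>\tfrac{3}{2}\,a^2/\big((m-1)!\,(m+1)!\big)$ coming from the maximal partitions of $n\pm1$. Since every quantity is positive, the target $p_d(n)^2<p_d(n-1)p_d(n+1)$ follows from the square-rooted inequality $a/m!+b<(a/m!)\sqrt{3m/(2(m+1))}$, that is from
\begin{equation*}
\frac{b\,m!}{a}=(8/9)^{d-1}\,p_1(n)\,m!<\sqrt{\tfrac{3m}{2(m+1)}}-1=:\kappa_m .
\end{equation*}
The quantity $\kappa_m$ is positive exactly when $m\ge 3$, i.e. $n\ge 11$; the single remaining value $n=8$ (where $m=2$ and $\kappa_2=0$) is already settled for all $d$ by Theorem~\ref{EINS}. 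For $n\ge 11$ I would take logarithms, bound $\ln(m!)\le m\ln m<(n/3)\ln((n+1)/3)$ and $\ln p_1(n)+\ln(1/\kappa_m)\le (n-1)\ln2+\ln(1/\kappa_3)<n\ln3$, and thereby reduce the condition to $(d-1)\ln(9/8)\ge n\ln3+(n/3)\ln((n+1)/3)$, which is precisely $d\ge C_2(n)$.

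The main obstacle is exactly this residue class $n\equiv2\pmod3$: because the dominant growth rates match, the entire argument hinges on the second-largest product of Lemma~\ref{zweitgroesstes} and on preserving the cancellation. Taking the square root rather than expanding the square is what keeps the single-factorial structure intact and yields the clean constant $C_2(n)$, whereas an expanded estimate would double the factorial and destroy the bound. The degeneration of the sign-deciding factor $\kappa_m$ at $m=2$ is also what forces the boundary value $n=8$ to be imported from Theorem~\ref{EINS} rather than covered by the general estimate.
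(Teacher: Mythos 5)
Your treatment of the three congruence classes for $n\neq 8$ is correct and essentially the paper's own argument. The cases $n\equiv 0$ and $n\equiv 1\pmod 3$ match the paper's Cases 1 and 2 exactly (same pairing of upper and lower bounds, same crude estimates $\ln(k!)\le k\ln k$ and $p_1(k)\le 2^{k-1}$). For $n\equiv 2\pmod 3$, $n\ge 11$, your square-rooted comparison $a/m!+b<(a/m!)\sqrt{3m/(2(m+1))}$ is a valid and slightly cleaner variant of what the paper does; the paper instead expands the square and bounds the ratio by $\frac{2(n+1)}{3(n-2)}+\frac23\left(2+\left(\frac23\right)^n\right)\left(\frac23\right)^n\le\frac89+\frac23\left(2+\left(\frac23\right)^n\right)\left(\frac23\right)^n<1$, so your side remark that expanding the square would ``destroy the bound'' is not accurate --- both routes work, resting on the same two ingredients: the second-largest-product bound \eqref{improve} and the factor-$\frac32$ lower bound for $n-1\equiv 1\pmod 3$. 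Your reductions to $C_0(n)$, $C_1(n)$, $C_2(n)$ all check out numerically.

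The genuine gap is at $n=8$. You dispose of it by citing Theorem~\ref{EINS}, but within this paper that is circular: the paper's proof of Theorem~\ref{EINS} at $n=8$ consists of a numerical verification for $d\le 100$ \emph{together with an application of Theorem~\ref{verbesserung} for} $d\ge\left\lceil C_2(8)\right\rceil=101$ --- i.e.\ precisely the statement you are trying to establish. More fundamentally, the claim at $n=8$ concerns all $d\ge C_2(8)$, an infinite range, so no finite computation can replace an analytic argument there, and your machinery degenerates at exactly this point ($\kappa_2=0$). The paper closes this by digging one level deeper than Lemma~\ref{zweitgroesstes}: the third largest product for partitions of $8$ is $15=5\cdot 3$, which yields $p_d(8)<\frac12 18^{d-1}+\frac{25}{24}16^{d-1}+21\cdot 15^{d-1}$, while $p_d(7)>\frac32 12^{d-1}$ and $p_d(9)>\frac16 27^{d-1}+\frac76 24^{d-1}$ (largest plus second largest products). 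This gives $p_d(7)p_d(9)-\left(p_d(8)\right)^2>\frac{17}{24}288^{d-1}-508\cdot 270^{d-1}>0$ for $d\ge 103$, and the finitely many remaining values $101\le d\le 102$ are checked directly. You must supply this (or some equivalent, independent) argument for $n=8$; without it the third inequality of the theorem is unproved for that value.
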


\begin{proof}
We
apply the upper and lower bounds from
Subsection~\ref{schranken}. Let $n\geq 3$.

\noindent
\textbf{Case 1:}
$n\equiv 0\mod 3$.
Then
\begin{eqnarray*}
&&\frac{\left( p_{d
}\left( n\right) \right) ^{2}}{p_{d
}\left( n-1\right) p_{d
}\left( n+1\right) }\\
&\geq &
\frac{3^{\left( d-1\right) 2n/3}}{\left( \left( n/3\right) !\right) ^{2}p_{1
}\left( n-1\right)
\left( 2\cdot 3^{\left( n-3\right) /3}\right) ^{d-1}p_{1
}\left( n+1\right)
\left( 4\cdot 3^{
\left( n-3\right) /3}\right) ^{d-1}
}\\
&=&\frac{1}{\left( \left( n/3\right) !\right) ^{2}p_{1
}\left( n-1\right) p_{1
}\left( n+1\right)
}\left( \frac{9}{8}\right) ^{d-1}\\
&\geq &\left( n/3\right) ^{-2n/3}2^{-2n}\left( \frac{9}{8}\right) ^{d-1}
>1
\end{eqnarray*}
for
$d\geq 1+2\frac{
\ln \left( 2
\right) +\ln \left(
n
/3\right) /3
}{\ln \left( 9/8\right) }n$.

\noindent
\textbf{Case
2:}
$n\equiv 1\mod 3$.
Then
\begin{eqnarray*}
\frac{\left( p_{d
}\left( n\right) \right) ^{2}}{p_{d
}\left( n-1\right) p_{d
}\left( n+1\right) }&\leq &
\frac{\left( \left( n-1\right) /3\right) !\left( \left( n-1\right) /3\right) !\left(
\left( 4\cdot 3^{\left( n-4\right) /3}\right) ^{d-1}p_{1
}\left( n\right) \right) ^{2}
}{\left( 2\cdot 3^{\left( n-1\right) /3}\right) ^{d-1}3^{\left( n-1\right) \left( d-1\right) /3}}\\
&=&\left(
\left( \left( n-1\right) /3\right) !p_{1
}\left( n\right) \right) ^{2}\left( \frac{8}{9}\right) ^{d-1}\\
&\leq &
\left( \left( n-1\right) /3\right) ^{2n/3}2^{2n}\left( \frac{8}{9}\right) ^{d-1}
<1
\end{eqnarray*}
for
$d\geq 1+2\frac{
\ln \left( 2\right) +\ln \left( \left( n-1\right) /3\right) /3
}{\ln \left( 9/8\right) }n$.

\noindent \textbf{Case
3:}
$n\equiv 2\mod 3$ and $n\geq 11$.
Then
\begin{eqnarray*}
&&\frac{\left( p_{d
}\left( n\right) \right) ^{2}}{p_{d
}\left( n-1\right) p_{d
}\left( n+1\right) }\\
&\leq &\frac{\left( \frac{\left( 2\cdot 3^{\left( n-2\right) /3}\right) ^{d-1}}{\left( \left( n-2\right) /3\right) !}+
\left( 16\cdot 3^{\left( n-8\right) /3}\right) ^{d-1}p_{1
}\left( n\right) \right) ^{2}}{\frac{3\left( 4\cdot 3^{\left( n-5\right) /3}\right) ^{d-1}}{2\left( \left( n-5
\right) /3\right) !}\frac{3^{\left( d-1\right) \left( n+1\right) /3}}{\left( \left( n+1\right) /3\right) !}}\\
&=
&\frac{2\left( n+1\right) /3}{3\left( n-2\right) /
3}+\frac{
\frac{2}{\left( \left( n-2\right) /3\right) !}+
\left( \frac{8}{9}\right) ^{d-1}
p_{1
}\left( n\right)
}{\frac{3}{2\left( \left( n-5
\right) /3\right) !\left( \left( n+1\right) /3\right) !}}\left( \frac{8
}{
9
}\right) ^{d-1}p_{1
}\left( n\right) \\
&<
&\frac{8}{9}+
\frac{2
}{3
}
\left( 2+\left( \left( n+1\right) /3\right) ^{n/3}\left( \frac{8}{9}\right) ^{d-1}2^{n}\right) \left( \left( n+1\right) /3\right) ^{
n
/3}
\left( \frac{8}{9}\right) ^{d-1}2^{n}.
\end{eqnarray*}
If we suppose $d\geq C_{2}\left( n\right) $ we obtain
$\left( \left( n+1\right) /3\right) ^{n/3}\left(
{8}/{9}\right) ^{d-1}2^{n}\leq
\left( 2/3\right) ^{
n}
$
and
therefore,
\[
\frac{\left( p_{d}\left( n\right) \right) ^{2}}{p_{d}\left( n-1\right) p_{d}\left( n+1\right) }
<
\frac{8}{9}+\frac{2}{3}\left( 2+
\left( \frac{2}{3}\right) ^{
n}\right) \cdot
\left( \frac{2}{3}\right) ^{
n}<1.
\]

\noindent \textbf{Case
4:}
$n=8$. Here
the previous argument does not apply, since $\frac{2\left( n+1\right) }{3\left( n-2\right) }=1$.
Therefore, this case has to be treated separately.
To estimate $p_{d}\left( 8\right) $, we have to determine the third largest
products $m_{1}\cdots m_{k}$ when $m_{1}+\ldots +m_{k}=8$. The largest product is
$18=2\cdot 3\cdot 3$. From (\ref{eq:2mod3})
we obtain $16$ from
$4\cdot 4=4\cdot 2\cdot 2=
2^{4}$
as the second largest value. So, the third largest value could be
$15$, which is indeed obtained for
$5\cdot 3$. Therefore, we obtain
\[
p_{d}\left( 8\right) <\frac{1}{2}18^{d-1}+\frac{25}{24}16^{d-1}+21
\cdot 15^{d-1}
.
\]
In Lemma~\ref{pdn} we show that
\[
p_{d}\left( 7\right) >\frac{3}{2}12^{d-1}
.
\]
In a similar way we can
obtain that
\[
p_{d}\left( 9\right) >\frac{1}{6}27^{d-1}+\frac{7}{6}24^{d-1}
\]
from
$3\cdot 3\cdot 3$,
the largest product, and
$2\cdot 2\cdot 2\cdot 3=2\cdot 4\cdot 3$
the second largest products.
Therefore,
\begin{eqnarray*}
&&p_{d}\left( 7\right) p_{d}\left( 9\right) -\left( p_{d}\left( 8\right) \right) ^{2}\\
&>&\frac{17}{24}288^{d-1}
-21
\cdot 270^{d-1}-\frac{625}{576}256^{d-1}-\frac{
175}{4
}240^{d-1}
-441
\cdot 225^{d-1}\\
&>&\frac{17}{24}288^{d-1}
-508
\cdot 270^{d-1}>0
\end{eqnarray*}
for $d\geq 103
$. For $9\leq d\leq 102
$ it can be checked that also
$\frac{\left( p_{d}\left( 8\right) \right) ^{2}}{p_{d}\left( 7\right) p_{d}\left( 9\right) }<1$.
\end{proof}

\begin{remarks}
With
minor
simplifications (see below)
and using numerical values, we obtain also
the following lower bounds for $d$:
\begin{eqnarray*}
\left( p_{d
}\left( n\right) \right) ^{2}&>&p_{d
}\left( n-1\right) p_{d
}\left( n+1\right) \text{ for }n\equiv 0\mod 3,\, d\geq
C_{0}^{\ast }\left( n\right)
,\\
\left( p_{d
}\left( n\right) \right) ^{2}&<&p_{d              }\left( n-1\right) p_{d
}\left( n+1\right) \text{ for }n\equiv 1\mod 3,\, d\geq
C_{1}^{\ast }\left( n\right) ,\\
\left( p_{d
}\left( n\right) \right) ^{2}&<&p_{d
}\left( n-1\right) p_{d
}\left( n+1\right) \text{ for }n\equiv 2\mod 3,
\, d\geq
C_{2}^{\ast }\left( n\right)
\end{eqnarray*}
where we have
\begin{eqnarray*}
C_{0}^{\ast }\left( n\right) &:= &
1+ 5.67\left( 1+\ln \left( n\right) \right)
n,\\
C_{1}^{\ast }\left( n\right) &:= &
1+ 5.67\left( 1+\ln \left( n-1\right) \right)
n, \\
C_{2}^{\ast }\left( n\right)
&:= &
1+2.84\left( 2.2
+\ln \left( n+1\right) \right)
n.
\end{eqnarray*}
\end{remarks}

\begin{proof}[Simplifications]
Note that
$
\ln \left( n/3\right) /3=-\ln \left( 3\right) /3+\ln \left( n\right) /3$
and
\[
2\frac{\ln \left( 2\right) -\ln \left( 3\right) /3}{\ln \left( 9/8\right) }\approx 5.55<5.67
\]
and
$2\frac{1/3}{\ln \left( 9/8\right) }\approx 5.66<5.67$.
Therefore,
$5.67\left( 1+\ln \left( n\right) \right)
>2\frac{\ln \left( 2\right) +\ln \left( n/3\right) /3}{\ln \left( 9/8\right) }
$
which implies
$C_{r
}^{\ast }\left( n\right)
>C_{r
}\left( n\right) $
for $r
=0,1$. On the other hand
$\frac{\ln \left( 3
\right) -\ln \left( 3\right) /3}{\ln \left( 9/8\right) }\approx
6.22<
6.248$
and
$\frac{1/3}{\ln \left( 9/8\right) }\approx 2.83<2.84$.
Therefore,
$2.84\left( 2.2
+\ln \left( n+1\right) \right) >\frac{\ln \left( 3
\right) +\ln \left( \left( n+1\right) /3\right) /3}{\ln \left( 9/8\right) }
$
which implies
$C_{2
}^{\ast }\left( n\right) >C_{
2}\left( n\right) $.
\end{proof}

\section{Proof of Theorem \ref{EINS}}
We first express $p_d(n)$ by $p_d(n-1)$.
This
may be interesting
in its own
way.
\begin{lemma}
\label{pdn}Let $d \geq 1$. Then $p_{d}\left( 0
\right) = p_{d}\left( 1
\right) =1$ and $p_{d} (2)  = 2^{d-1}+      p_{d}(1)$.
Moreover, $p_{d}(3) =  3^{d-1}+p_{d}(2)$, $p_{d}(4)  = \frac{3}{2}\cdot 4^{d-1}+\frac{1}{2}2^{d-1}+p_{d}(3)$ and
\begin{eqnarray*}
p_{d}(5) &=&5^{d-1}+ 6^{d-1}+p_{d}(4),\\
p_{d}(6)&=&\frac{1}{2}9^{d-1}+\frac{7}{6}8^{d-1}+6^{d-1}+\frac{1}{2}4
^{
d-1
}+\frac{1}{2}3^{d-1}
+\frac{1
}{3}2^{d-1}+p_{d}(5),\\
p_{d}(7)&=&\frac{3
}{2
}12^{d-1}+10^{d-1}+7^{d-1}+\frac{1}{2}6^{d-1}+p_{d}(6),\\
p_{d}\left( 8\right) &=&\frac{1}{2
}18^{d-1}+\frac{25}{
24}16^{d-1}+15^{d-1}+12^{d-1}+\frac{7
}{4
}8^{d-1}+\frac{1}{2}6^{d-1} \\
& & {}+\frac{23
}{24}4^{d-1}+\frac{1}{4}2^{d-1}+p_{d}\left( 7\right) .
\end{eqnarray*}
\end{lemma}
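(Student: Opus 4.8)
The plan is to prove the formulas by directly expanding the series
\[
p_{d}(n)=\sum_{k\leq n}\frac{1}{k!}\sum_{m_{1}+\cdots+m_{k}=n}\frac{\sigma_{d}(m_{1})\cdots\sigma_{d}(m_{k})}{m_{1}\cdots m_{k}}
\]
recalled at the start of this section, making the dependence on $d$ fully explicit. The key preliminary step is to rewrite the single-part weight: for a part $m$, grouping each divisor $t\mid m$ with its cofactor $m/t$ gives $\frac{\sigma_{d}(m)}{m}=\sum_{t\mid m}\frac{t^{d}}{m}=\sum_{t\mid m}\frac{t^{d-1}}{m/t}$. Hence a partition $m_{1}+\cdots+m_{k}=n$ together with a choice of divisor $t_{j}\mid m_{j}$ for every part contributes the single monomial $\frac{1}{k!}\frac{1}{\prod_{j}(m_{j}/t_{j})}\,V^{d-1}$ in the variable $V^{d-1}$, where $V=\prod_{j}t_{j}$. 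Collecting monomials by the value of $V$ turns $p_{d}(n)$ into a finite sum $\sum_{V}c_{V}^{(n)}\,V^{d-1}$, whose rational coefficient $c_{V}^{(n)}=\sum\frac{1}{k!\prod_{j}(m_{j}/t_{j})}$ is the sum over all ordered partitions of $n$ and all divisor selections with $\prod_{j}t_{j}=V$.

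With this reduction the proof becomes a bounded enumeration. The base cases $p_{d}(0)=p_{d}(1)=1$ are immediate (the empty partition, respectively the single part $m_{1}=1$ with $\sigma_{d}(1)=1$). For each $n$ from $2$ to $8$ I would list the partitions of $n$ — there are $p(n)$ of them, rising to $22$ for $n=8$ — and for each part run through its divisors, recording the value $V=\prod_{j}t_{j}$ and the weight $\frac{1}{k!\prod_{j}(m_{j}/t_{j})}$. Summing the weights for each fixed $V$ yields the coefficients $c_{V}^{(n)}$, and the asserted identities are then read off by comparison: the stated closed form $p_{d}(n)=\big(\text{listed terms}\big)+p_{d}(n-1)$ is precisely the difference $p_{d}(n)-p_{d}(n-1)=\sum_{V}\big(c_{V}^{(n)}-c_{V}^{(n-1)}\big)V^{d-1}$ written out, the listed monomials being its nonvanishing terms.

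The computation carries no conceptual content, so the main obstacle is purely the volume of bookkeeping for $n=7$ and especially $n=8$, where one must track every divisor choice across all partitions and add the resulting rationals without error. Two independent checks keep this under control. First, the coefficient of $V=1$ (the part independent of $d$) equals $\sum_{\lambda\vdash n}\big(\prod_{i}i^{a_{i}}a_{i}!\big)^{-1}$, which is $1$ for every $n$ by the classical identity $\sum_{\lambda\vdash n}1/z_{\lambda}=1$; since this cancels in every difference $p_{d}(n)-p_{d}(n-1)$, it guards the lowest-order terms. Second, one may reconfirm the final coefficients against the recursion $n\,p_{d}(n)=\sum_{j=1}^{n}\sigma_{d}(j)\,p_{d}(n-j)$, obtained by logarithmic differentiation of the infinite product, which gives a completely separate route to each $p_{d}(n)$ and thereby validates the enumeration.
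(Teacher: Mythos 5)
Your proposal takes essentially the same route as the paper: its proof expands the very same formula $p_{d}(n)=\sum_{k\leq n}\frac{1}{k!}\sum_{m_{1}+\cdots +m_{k}=n}\frac{\sigma_{d}(m_{1})\cdots \sigma_{d}(m_{k})}{m_{1}\cdots m_{k}}$ term by term for $2\leq n\leq 8$, expands each $\sigma_{d}(m)=\sum_{t\mid m}t^{d}$, and collects the divisor products into monomials $V^{d-1}$ — exactly your coefficients $c_{V}^{(n)}$ — before reading off the differences $p_{d}(n)-p_{d}(n-1)$. The only distinction is that the paper executes the full bookkeeping you outline rather than describing it; your two cross-checks (the identity $\sum_{\lambda \vdash n}1/z_{\lambda }=1$ for the constant term and the recursion $n\,p_{d}(n)=\sum_{j=1}^{n}\sigma_{d}(j)\,p_{d}(n-j)$) are both valid and would indeed safeguard the arithmetic.
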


\begin{corollary}
\label{logkonkav}
For $1\leq n
\leq 7
$ we have the following
relations between $\left( p_{d
}\left( n\right) \right) ^{2}$ and
$p_{d
}\left( n-1\right) p_{d
}\left( n+1\right) $:
\begin{eqnarray*}
\left( p_{d}\left( 1\right) \right) ^{2}&<&p_{d}\left( 0\right) p_{d}\left( 2\right) \text{ for all }d\geq 1, \\
\left( p_{d}\left( 2\right) \right) ^{2}&>&p_{d}\left( 1\right) p_{d}\left( 3\right) \text{ for all }d\geq 1, \\
\left( p_{d}\left( 3\right) \right) ^{2}&>&p_{d}\left( 2\right) p_{d}\left( 4\right) \text{
if and only if }d\geq 4, \\
\left( p_{d}\left( 4\right) \right) ^{2}&<&p_{d}\left( 3\right) p_{d}\left( 5\right) \text{
if and only if }d\geq 6, \\
\left( p_{d}\left( 5\right) \right) ^{2}&>&p_{d}\left( 4\right) p_{d}\left( 6\right) \text{
if and only if }d\geq 10, \\
\left( p_{d}\left( 6\right) \right) ^{2}&>&p_{d}\left( 5\right) p_{d}\left( 7\right) \text{ for all }d\geq 1, \\
\left( p_{d}\left( 7\right) \right) ^{2}&<&p_{d}\left( 6\right) p_{d}\left( 8\right) \text{
if and only if }d\leq 3\text{
or }d\geq 11.
\end{eqnarray*}
\end{corollary}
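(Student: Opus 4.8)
The plan is to feed the closed forms of Lemma~\ref{pdn} directly into each of the seven quantities and treat the outcome as an exponential sum in the single variable $x := d-1$. By Lemma~\ref{pdn}, every $p_d(m)$ with $0\le m\le 8$ is a finite nonnegative rational combination of terms $b^{x}$ with $b\in\mathbb{N}$, so after multiplying out, each difference
$$\Delta_d(n) = \left(p_d(n)\right)^2 - p_d(n-1)\,p_d(n+1)$$
expands as $\Delta_d(n) = \sum_{j} c_j\, b_j^{\,x}$ with distinct positive integer bases $b_j$ and rational coefficients $c_j$. The sign of the coefficient attached to the \emph{largest} surviving base then fixes the sign of $\Delta_d(n)$ for all large $d$, which already pins the direction of each inequality in the limit: one reads off the top-base competition as $9$ against $8$ for $n=3$, $16$ against $18$ for $n=4$, the tie $36$ against $36$ broken by the coefficients $1$ versus $\tfrac34$ for $n=5$, $81$ against $72$ for $n=6$, and $144$ against $162$ for $n=7$.

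For the three cases asserted to hold for \emph{all} $d\ge 1$ (namely $n=1,2,6$) I would finish by an elementary termwise comparison: after cancellation the surviving expression is manifestly of one sign for every $x\ge 0$ — for instance $n=2$ gives $4^{x}-3^{x}+2^{x}>0$ and $n=1$ gives $-2^{x}<0$ — so no asymptotics are needed. The four threshold statements are the substance, and for these the crux is to show that $\Delta_d(n)$, read as a function of the real variable $x$, changes sign only at the claimed places. Here I would use the variation-diminishing property of the Chebyshev system $\{b^{x}\}$: an exponential sum $\sum_{j=1}^m c_j b_j^{x}$ with $0<b_1<\dots<b_m$ has at most as many real zeros (hence sign changes) as there are sign changes in its coefficient sequence $(c_1,\dots,c_m)$; this is the standard Rolle/Descartes argument, dividing by $b_1^{x}$ and differentiating to delete one term at a time. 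Counting the sign changes of the explicit (finitely computable) coefficient vector of each $\Delta_d(n)$ caps the number of crossings, after which a handful of direct integer evaluations locates them. For $n=3,4,5$ the coefficient vector turns out to have a single sign change, so $\Delta_d(n)$ crosses zero once; evaluating at the threshold $d_0\in\{4,6,10\}$ and at $d_0-1$ shows the crossing sits between $d_0-1$ and $d_0$, giving the stated ``if and only if''.

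The main obstacle is $n=7$, where $(p_d(7))^2<p_d(6)p_d(8)$ must hold \emph{precisely} for $d\le 3$ or $d\ge 11$, so $\Delta_d(7)$ has to change sign \emph{twice}. The asymptotic comparison of $144^{x}$ (coefficient $\tfrac94$) against $162^{x}$ (coefficient $\tfrac14$) shows the top base $162$ eventually wins with a negative sign; but the two leading terms alone would balance only near $x\approx 18.6$, whereas the many negative lower-order terms from the cross products in $p_d(6)p_d(8)$ drag the true sign change down to between $d=10$ and $d=11$. A two-term estimate is therefore far too crude to locate the lower threshold, and this is exactly where the variation-diminishing bound earns its keep. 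I would verify that the coefficient sequence of $\Delta_d(7)$ has exactly two sign changes, so $\Delta_d(7)$ has at most two real sign changes over all $x$; the explicit evaluations $\Delta_d(7)<0$ at $d=1,2,3$, then $\Delta_d(7)>0$ at $d=4,\dots,10$, then $\Delta_d(7)<0$ at $d=11$ already exhibit two sign changes, so none can occur beyond $d=11$ and the sign stays negative for every larger $d$. Together with the single-crossing analysis for $n=3,4,5$ and the termwise bounds for $n=1,2,6$, this establishes all seven relations.
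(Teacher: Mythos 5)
Your overall architecture---expanding $\Delta_d(n)$ via Lemma~\ref{pdn} as an exponential sum $\sum_j c_j b_j^{x}$ in $x=d-1$, reading the asymptotic sign off the largest base, capping the number of crossings, and finishing with finitely many integer evaluations---is genuinely different from the paper, which instead sandwiches each $p_d(m)$ between crude one- or two-term bounds (e.g.\ $p_d(6)\geq\tfrac12 9^{d-1}$, $p_d(5)\leq 7\cdot 6^{d-1}$, $p_d(7)\leq 15\cdot 12^{d-1}$) to obtain explicit thresholds $d\geq 21,29,47,53,59$ and then verifies all smaller $d$ directly. Your leading-base bookkeeping is correct, including the tie $36$ vs.\ $36$ at $n=5$ broken by $1$ vs.\ $\tfrac34$, and the observation that a two-term estimate cannot locate the lower threshold at $n=7$.

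However, the crossing-control step, which carries all the weight, rests on sign-change counts that are factually wrong, so the proof as written fails. From Lemma~\ref{pdn} one computes
\begin{equation*}
\Delta_d(3)=9^{x}-\tfrac32\,8^{x}+6^{x}-2\cdot 4^{x}+3^{x}-\tfrac12\,2^{x},
\end{equation*}
whose coefficient vector $\left( 1,-\tfrac32,1,-2,1,-\tfrac12\right) $ has \emph{five} sign changes, not one; likewise $\Delta_d(4)=-18^{x}+\tfrac94\,16^{x}-15^{x}+\tfrac12\,12^{x}-10^{x}+3\cdot 8^{x}-\cdots$ has seven, and for $n=7$ the expansion begins $-\tfrac14\,162^{x}+\tfrac{55}{48}\,144^{x}-\tfrac12\,135^{x}-\tfrac{175}{144}\,128^{x}+\tfrac{11}{6}\,120^{x}-\tfrac34\,108^{x}+100^{x}-\tfrac32\,96^{x}-\cdots$, i.e.\ at least six sign changes rather than the claimed two. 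The variation-diminishing bound therefore only caps the number of real zeros at $5$, $7$, and $6{+}$ respectively, so exhibiting one (resp.\ two) crossings among small integers $d$ no longer excludes further crossings at larger $d$; even the parity refinement leaves $3$ or $5$ zeros possible for $n=3$. The $n=6$ case is broken for the same reason: $\left( p_d(6)\right) ^{2}-p_d(5)p_d(7)$ is \emph{not} termwise of one sign (the coefficient of $72^{x}$ is $\tfrac76-\tfrac32=-\tfrac13$ and that of $60^{x}$ is $-\tfrac52$), so no ``manifest'' positivity argument exists there, in contrast to $n=1,2$ where your computations $-2^{x}$ and $4^{x}-3^{x}+2^{x}$ are correct. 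To repair the argument you must either dominate the negative lower-order terms explicitly---which is essentially the paper's route, yielding an explicit $d$-threshold followed by a finite check---or carry out the sign-change counts honestly and then account for all possible crossings beyond the last integer you check, which your draft does not do.
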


To complete the proof of Theorem~\ref{EINS}, Lemma~\ref{pdn} could be extended
to $n=9$ and $n=10$. Since the bounds
$C_{2}\left( 8\right) $ and
$1+18
\frac{\ln \left( 2\right) +\ln \left(
3\right) /3}{\ln \left( 9/8\right) }
$
are not too large, we can use the results of
Theorem~\ref{verbesserung}.

Assume $n=8$. Then $\left\lceil C_{2}\left( 8\right) \right\rceil =
101$.
It can be checked that
$\frac{\left( p_{d}\left( 8\right) \right) ^{2}}{p_{d}\left( 7\right) p_{d}\left( 9\right) }<
1$
for $9
\leq d
\leq
100$ and that
$\frac{\left( p_{d}\left( 8\right) \right) ^{2}}{p_{d}\left( 7\right) p_{d}\left( 9\right) }>1$
for $d\leq 8$.
For $n=9$ we obtain
$\left\lceil 1+18
\frac{\ln \left( 2\right) +\ln \left( 9/3\right) /3}{\ln \left( 9/8\right) }\right\rceil =163$.
Again, it can be checked that
$\frac{\left( p_{d}\left( 9\right) \right) ^{2}}{p_{d}\left( 8\right) p_{d}\left( 10\right) }>1$
for all $3\leq d
\leq 162
$ and that
$\frac{\left( p_{d}\left( 9\right) \right) ^{2}}{p_{d}\left( 8\right) p_{d}\left( 10\right) }<1$
for $d\leq 2$.

\appendix

\section{Bounds on $d$}

\begin{figure}[H]
\includegraphics[width=0.4\textwidth]{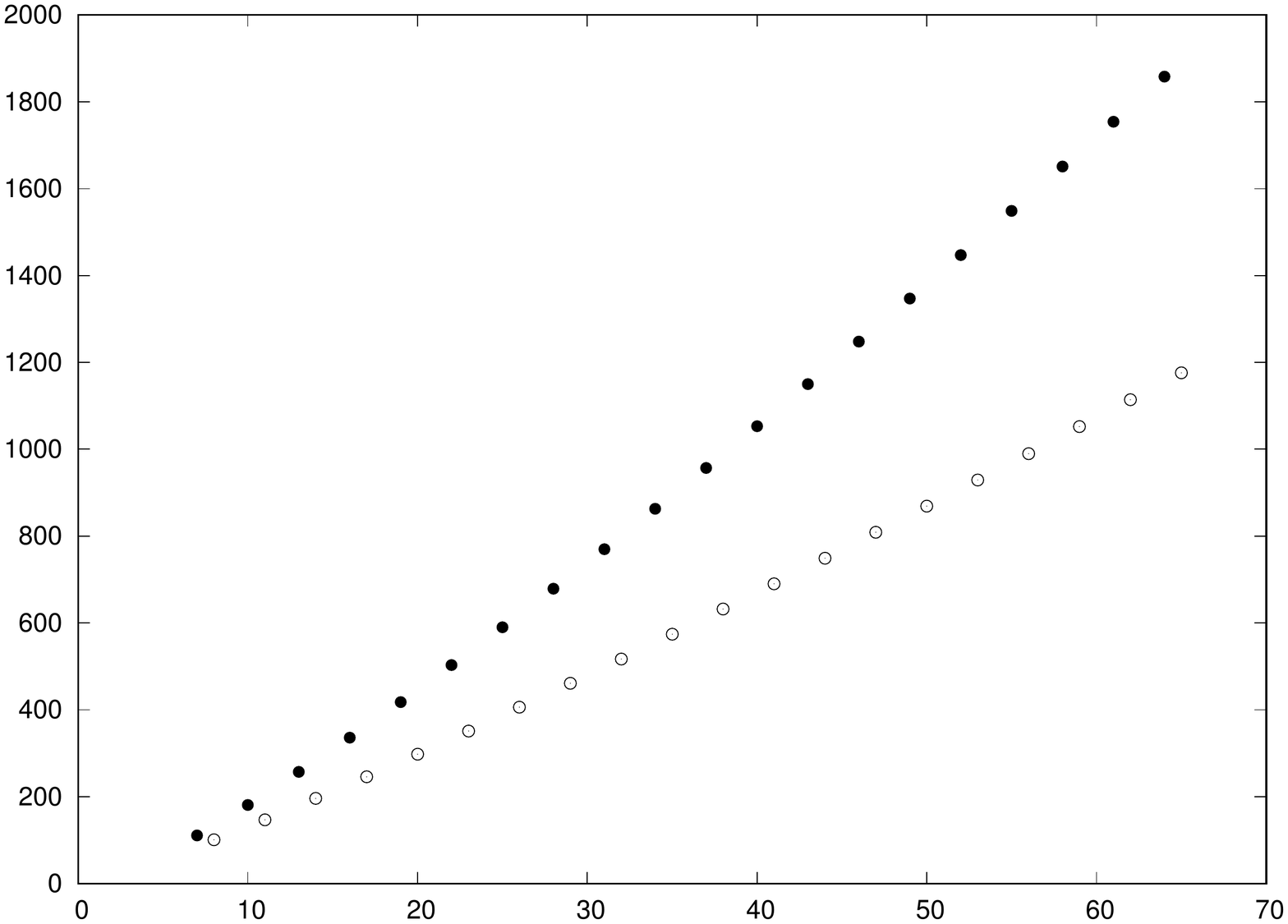}
\includegraphics[width=0.4\textwidth]{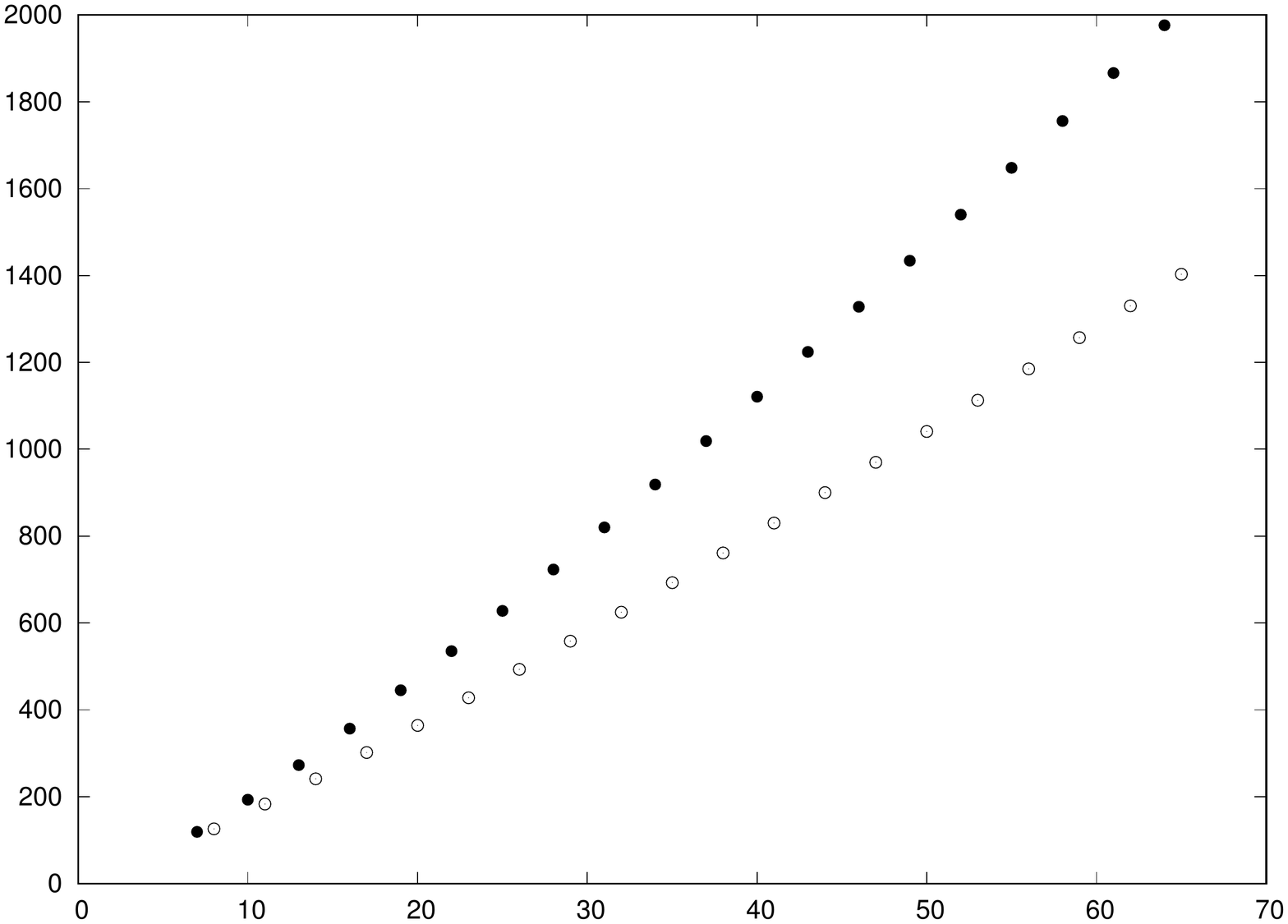}
\caption{\label{bild}Graphical representation of the bounds on $d$ from Table~\ref{vergleich}: left
$C_{1}\left( n\right) ,C_{2}\left( n\right) $, right
$\tilde{C}_{1}\left( n\right) ,\tilde{C}_{2}\left( n\right) $.}
\end{figure}

\begin{table}[H]
\[
\begin{array}{rrrrrr}
\hline
n&
\left\lceil C_{1}\left( n\right) \right\rceil
& \left\lceil \tilde{C}_{1}\left( n\right) \right\rceil &n&
\left\lceil C_{2}\left( n\right) \right\rceil
&\left\lceil \tilde{C}_{2}\left( n\right) \right\rceil \\ \hline \hline
7 & 111 & 119 & 8 & 101 & 126 \\
10 & 181 & 193 & 11 & 147 & 183 \\
13 & 257 & 273 & 14 & 196 & 241 \\
16 & 336 & 357 & 17 & 246 & 302 \\
19 & 418 & 445 & 20 & 298 & 364 \\
22 & 503 & 535 & 23 & 351 & 428 \\
25 & 590 & 628 & 26 & 406 & 493 \\
28 & 679 & 723 & 29 & 461 & 558 \\
31 & 770 & 820 & 32 & 517 & 625 \\
34 & 863 & 919 & 35 & 574 & 693 \\
37 & 957 & 1019 & 38 & 632 & 761 \\
40 & 1053 & 1121 & 41 & 690 & 830 \\
43 & 1150 & 1224 & 44 & 749 & 900 \\
46 & 1248 & 1328 & 47 & 809 & 970 \\
49 & 1347 & 1434 & 50 & 869 & 1041 \\
52 & 1447 & 1540 & 53 & 929 & 1113 \\
55 & 1549 & 1648 & 56 & 990 & 1185 \\
58 & 1651 & 1756 & 59 & 1052 & 1257 \\
\hline
\end{array}
\]
\caption{\label{vergleich}Comparison of the bounds on $d$.}
\end{table}

\section{Proofs of Lemma~\ref{pdn} and Corollary~\ref{logkonkav}}

\begin{proof}[Proof of Lemma~\ref{pdn}]
We have $p_d(0)= p_d(1)=1$. Moreover $p_d(2) = \frac{\sigma _{d}\left( 2\right) +1}{2}=2^{d-1}+1$. Further,
$p_{d}(3)=\frac{\sigma _{d}\left( 3\right) }{3}+\frac{
\sigma _{d}\left( 2\right) }{2}+\frac{1}{6}=3^{d-1}+2^{d-1}+1=3^{d-1}+p_{d}(2)$. Moreover,
\begin{eqnarray*}
p_{d}(4)&=&\frac{\sigma _{d}\left( 4\right) }{4}+\frac{\sigma _{d}\left( 3\right) }{3}+\frac{\left( \sigma _{d}\left( 2\right) \right) ^{2}}{8}+\frac{\sigma _{d}\left( 2\right) }{4}+\frac{1}{24}
=\frac{3}{2}\cdot 4^{d-1}+\frac{1}{2}2^{d-1}+p_{d}(3),\\
p_{d}(5)&=&\frac{\sigma _{d}\left( 5\right) }{5}+\frac{\sigma _{d}\left( 4\right) }{4}+\frac{\sigma _{d}\left( 3\right) \sigma _{d}\left( 2\right) }{6}+\frac{\sigma _{d}\left( 3\right) }{6}+\frac{\left( \sigma _{d}\left( 2\right) \right) ^{2}}{8}+\frac{\sigma _{d}\left( 2\right) }{12}+\frac{1}{120}\\                                                 &=&6^{d-1}+5^{d-1}+\frac{3}{2}\cdot               4                                                 ^{                                                d-1                                               }+                                                3^{d-1}+\frac{3}{2}\cdot 2^{d-1                   }                                                 +1=6^{d-1}+5^{d-1}+                               p_{d}(4),\\
p_{d}(6)&=&\frac{\sigma _{d}\left( 6\right) }{6}+\frac{\sigma _{d}\left( 5\right) }{5}+\frac{\sigma _{d}\left( 4\right) \sigma _{d}\left( 2\right) }{8}+\frac{\sigma _{d}\left( 4\right) }{8}+\frac{\left( \sigma _{d}\left( 3\right) \right) ^{2}}{18}+\frac{\sigma _{d}\left( 3\right) \sigma _{d}\left( 2\right) }{6}\\                                    &&{}+\frac{\sigma _{d}\left( 3\right) }{18}+\frac{\left( \sigma _{d}\left( 2\right) \right) ^{3}}{48}+\frac{\left( \sigma _{d}\left( 2\right) \right) ^{2}}{16}+\frac{\sigma _{d}\left( 2\right) }{48}+\frac{1}{720}\\                                    &=&\frac{1}{2                                     }9^{d-1}+\frac{7}{6                               }8^{d-1}+2\cdot 6^{d-1}+5^{d-1}+2\cdot 4^{d-1}+\frac{3}{2}3^{d-1}+\frac{11}{6}2^{d-1}+1\\           &=&\frac{1}{2                                     }9^{d-1}+\frac{7}{6
}8^{d-1}+6^{d-1}+\frac{1}{2}4                     ^{                                                d-1                                               }+\frac{1}{2}3^{d-1}                              +\frac{1                                          }{3                                               }2^{d-1}+p_{d}(5),\\                              p_{d}(7)&=&\frac{\sigma _{d}\left( 7\right) }{7}+\frac{\sigma _{d}\left( 6\right) }{6}+\frac{\sigma _{d}\left( 5\right) \sigma _{d}\left( 2\right) }{10}+\frac{\sigma _{d}\left( 5\right) }{10}+\frac{\sigma _{d}\left( 4\right) \sigma _{d}\left( 3\right) }{12}\\
&&{}+\frac{\sigma _{d}\left( 4\right) \sigma _{d}\left( 2\right) }{8}+\frac{\sigma _{d}\left( 4\right) }{24}+\frac{\left( \sigma _{d}\left( 3\right) \right) ^{2}}{18}+\frac{\sigma _{d}\left( 3\right) \left( \sigma _{d}\left( 2\right) \right) ^{2}}{24}\\                                               &&{}+\frac{\sigma _{d}\left( 3\right) \sigma _{d}\left( 2\right) }{12}+\frac{\sigma _{d}\left( 3\right) }{72}+\frac{\left( \sigma _{d}\left( 2\right) \right) ^{3}}{48}+\frac{\left( \sigma _{d}\left( 2\right) \right) ^{2}}{48}+\frac{\sigma _{d}\left( 2\right) }{240}+\frac{1}{5040}\\                  &=&\frac{3                                        }{2                                               }12^{d-1}+10^{d-1}+\frac{1}{2}9^{d-1}+\frac{7}{6}8^{d-1}+7^{d-1}+\frac{5}{2}6^{d-1}+5^{d-1}+2\cdot 4^{d-1}\\                                          &&{}+\frac{3
}{2}3^{d-1}+\frac{11}{6                           }2^{d-1}+1 =                                      \frac{3}{2}12^{d-1}+10^{d-1}+7^{d-1}+\frac{1}{2}6^{d-1}+p_{d}(6),                                   \end{eqnarray*}                                   and finally                                       \begin{eqnarray*}                                 p_{d}\left( 8\right) &=&\frac{\sigma _{d}\left( 8\right) }{8}+\frac{\sigma _{d}\left( 7\right) }{7}+\frac{\sigma _{d}\left( 6\right) \sigma _{d}\left( 2\right) }{12}+\frac{\sigma _{d}\left( 6\right) }{12}+\frac{\sigma _{d}\left( 5\right) \sigma _{d}\left( 3\right) }{15}\\                            &&{}+\frac{\sigma _{d}\left( 5\right) \sigma _{d}\left( 2\right) }{10}+\frac{\sigma _{d}\left( 5\right) }{30}+\frac{\left( \sigma _{d}\left( 4\right) \right) ^{2}}{32}+\frac{\sigma _{d}\left( 4\right) \sigma _{d}\left( 3\right) }{12}\\
&&{}+\frac{\sigma _{d}\left( 4\right) \left( \sigma _{d}\left( 2\right) \right) ^{2}}{32}+\frac{\sigma _{d}\left( 4\right) \sigma _{d}\left( 2\right) }{16}+\frac{\sigma _{d}\left( 4\right) }{96}+\frac{\left( \sigma _{d}\left( 3\right) \right) ^{2}\sigma _{d}\left( 2\right) }{36}\\
&&{}+\frac{\left( \sigma _{d}\left( 3\right) \right) ^{2}}{36}+\frac{\sigma _{d}\left( 3\right) \left( \sigma _{d}\left( 2\right) \right) ^{2}}{24}+\frac{\sigma _{d}\left( 3\right) \sigma _{d}\left( 2\right) }{36}+\frac{\sigma _{d}\left( 3\right) }{360}+\frac{\left( \sigma _{d}\left( 2\right) \right) ^{4}}{384}\\
&&{}+\frac{\left( \sigma _{d}\left( 2\right) \right) ^{3}}{96}+\frac{\left( \sigma _{d}\left( 2\right) \right) ^{2}}{192}+\frac{\sigma _{d}\left( 2\right) }{1440}+\frac{1}{40320}\\
&=&\frac{1}{2}18^{d-1}+\frac{25}{
24}16^{d-1}+15^{d-1}+\frac{5}{2}12^{d-1}+10^{d-1} +\frac{1}{2}9^{d-1}+\frac{35}{12
}8^{d-1}\\
&&{}
+7^{d-1}
+3\cdot
6^{d-1}+5^{d-1}+\frac{71}{24}4^{d-1}+\frac{3}{2}3^{d-1}+\frac{25}{12
}2^{d-1}+1\\
&
=&
\frac{1}{2
}18^{d-1}+\frac{25}{                           
24}16^{d-1}
+15^{d-1}+12^{d-1}+\frac{7}{4
}8^{d-1}+\frac{1}{2}6^{d-1}+\frac{23
}{24
}4^{d-1}+\frac{1}{4
}2^{d-1}\\
&&{}
+p_{d}\left( 7\right) .\\
\end{eqnarray*}
\end{proof} 

\begin{proof}[Proof of Corollary~\ref{logkonkav}]
The property $\Delta_d(1)<
0$ follows from $p_d(0)=p_d(1)=1$ and $p_d(2)= 2^{d-1} +1 >1$ for $d \geq 1$.
We use the results of Lemma \ref{pdn} to estimate the $p_{d}\left( n\right) $.
Therefore,  $p_{d}(2)\geq 2^{d-1}$ and
$p_{d}(3) \leq 3^{d}$. Thus,
\[
\frac{p_{d}(2)^{2}}{p_{d}(1) \, p_{d}(3) }\geq \frac{4^{d-1}}{3^{d}}>1
\]
for $d\geq 5$.
For 
the remaining cases $1\leq d\leq 4$
we checked directly that
$\left( p_{d}\left( 2
\right) \right) ^{2}
>p_{d}\left( 1
\right) p_{d}\left( 3
\right) $.

We have $p_{d}\left( 3\right) \geq 3^{d-1}$, $p_{d}\left( 2\right) \leq 2^{d}$,
and $p_{d}\left(
4
\right) \leq 5\cdot 4^{d-1}$. Therefore,
\[
\frac{\left( p_{d
}\left( 3\right) \right) ^{2}}{p_{d
}\left( 2\right) p_{d
}\left( 4\right) }\geq \frac{9^{d-1}}{10\cdot 8^{d-1}}>1
\]
for
$d\geq 21$.
For $4
\leq d\leq 20$ it can be checked that 
$\left( p_{d}\left( 3
\right) \right) ^{2}>p_{d}\left( 2
\right) p_{d}\left( 4
\right) $ and for $d\leq 3$ that
$\left( p_{d}\left( 3\right) \right) ^{2}<p_{d}\left( 2\right) p_{d}\left( 4\right) $.

From
Lemma~\ref{pdn}, we obtain
$p_{d}\left( 3
\right) p_{d}\left( 5
\right) \geq 18^{d-1}$ and
$p_{d}\left( 4\right) ^{2}\leq \left( 5
\cdot 4^{d-1}\right) ^{2}=25\cdot 16^{d-1}$. Then
\[
\frac{p_{d}\left( 3\right) p_{d}\left(
5\right) }{
\left( p_{d}\left( 4\right) \right) ^2}\geq \frac{18^{d-1}}{
25\cdot 16^{d-1}}>1
\]
for $d\geq 29$. For
$6\leq d\leq 28$ we can check that
$\frac{p_{d}\left( 3
\right) p_{d}\left(
5\right)
}{\left( p_{d}\left( 4\right) \right) ^{2}}>1
$ and for $d\leq 5$ that
$\frac{p_{d}\left( 3
\right) p_{d}\left( 5
\right) }{\left( p_{d}\left( 4\right) \right) ^{2}}<1$.

We have $p_{d}\left(
5
\right) \geq
6^{d-1}
$, $p_{d}\left( 4\right) \leq \frac{3}{2}4^{d-1}+\frac{7}{2}3^{d-1}$, and
$p_{d}\left( 6\right) \leq \frac{1}{2}9^{d-1}+\frac{21}{2}8^{d-1}
$. Therefore,
\[
\frac{\left( p_{d}\left( 5\right) \right) ^{2}}{p_{d}\left( 4\right) p_{d}\left( 6\right) }\geq \frac{36^{d-1}}{\frac{3}{4}36^{d-1}+\frac{63
}{4}32^{d-1}+\frac{7}{4}27^{d-1}+\frac{147
}{4}24^{d-1}}>\frac{36^{d-1}}{\frac{3}{4}36^{d-1}+55
\cdot 32^{d-1}}\geq 1
\]
for $d\geq 47
$. For $10
\leq d\leq 46
$ it can be checked that
$\left( p_{d}\left( 5
\right) \right) ^{2}
>p_{d}\left(
4\right) p_{d}\left(
6\right) $ and for $
d\leq 9$ that
$\left( p_{d}\left( 5\right) \right) ^{2}<
p_{d}\left( 4\right) p_{d}\left( 6\right) $.

We have $p_{d}\left( 6\right) \geq \frac{1}{2}9^{d-1}$,
$p_{d}\left( 5\right) \leq
7\cdot 6^{d-1}$, and
$p_{d}\left( 7\right) \leq 15\cdot 12^{d-1}$. Therefore,
\[
\frac{\left( p_{d}\left( 6\right) \right) ^{2}}{p_{d}\left( 5\right) p_{d}\left( 7\right) }\geq \frac{81^{d-1}}{420\cdot 72^{d-1}}>1
\]
for $d\geq 53$. For $
d\leq 52$ it can be checked that
$\left( p_{d}\left( 6\right) \right) ^{2}>p_{d}\left( 5\right) p_{d}\left( 7\right) $.

We have $p_{d}\left( 7\right) \leq 15\cdot 12^{d-1}$,
$p_{d}\left( 6\right) >\frac{1}{2}9^{d-1}$, and
$p_{d}\left( 8\right) >\frac{1}{2}18^{d-1}$. Therefore,
\[
\frac{\left( p_{d}\left( 7\right) \right) ^{2}}{p_{d}\left( 6\right) p_{d}\left( 8\right) }<\frac{225\cdot 144^{d-1}}{\frac{1}{4}162^{d-1}}<1
\]
for $d\geq 59$. For $d\leq 3$ and $11\leq d
\leq 58$ it can also be checked that
$\frac{\left( p_{d}\left( 7\right) \right) ^{2}}{p_{d}\left( 6\right) p_{d}\left( 8\right) }<1$
and for $4\leq d\leq 10$ that
$\frac{\left( p_{d}\left( 7\right) \right) ^{2}}{p_{d}\left( 6\right) p_{d}\left( 8\right) }>1$.
\end{proof}

\section{Patterns in the $n$ and $d$ directions}
\begin{table}[H]                                  \[                                                \begin{array}{r|cccccccccccccccccccc}             \hline                                            n\backslash d&{1}& {2}&3&4&5&6&7&8&9&10&11&12&13&14&15&16&17&18&19&20\\ \hline \hline               1&\bullet &\bullet &\bullet &\bullet &\bullet &\bullet &\bullet &\bullet &\bullet &\bullet &\bullet &\bullet &\bullet &\bullet &\bullet &\bullet &\bullet &\bullet &\bullet &\bullet  \\                2&&&&&&&&&&&&&&&&&&&&\\                           3&\bullet &\bullet &\bullet &&&&&&&&&&&&&&&&&\\   \hline                                            4&&&&&&\bullet &\bullet &\bullet &\bullet &\bullet &\bullet &\bullet &\bullet &\bullet &\bullet &\bullet &\bullet &\bullet &\bullet &\bullet \\       5&\bullet &\bullet &\bullet &\bullet &\bullet &\bullet &\bullet &\bullet &\bullet &&&&&&&&&&&\\
6&&&&&&&&&&&&&&&&&&&&\\                           \hline                                            7&\bullet &\bullet &\bullet &&&&&&&&\bullet &\bullet &\bullet &\bullet &\bullet &\bullet &\bullet &\bullet &\bullet &\bullet  \\                      8&&&&&&&&&\bullet &\bullet &\bullet &\bullet &\bullet &\bullet &\bullet &\bullet &\bullet &\bullet &\bullet &\bullet  \\                              9&\bullet &\bullet &&&&&&&&&&&&&&&&&&\\           \hline                                            10&&&&&&&&&&&&&&&&\bullet &\bullet &\bullet &\bullet &\bullet \\                                    11&\bullet &\bullet &&&&&&&&&&\bullet &\bullet &\bullet &\bullet &\bullet &\bullet &\bullet &\bullet &\bullet  \\                                     12&&&&&&&&&&&&&&&&&&&&\\                          \hline                                            13&\bullet &&&&&&&&&&&&&&&&&&&\bullet \\
14&&&&&&&&&&&&&&&\bullet &\bullet &\bullet &\bullet &\bullet &\bullet  \\                           15&\bullet &&&&&&&&&&&&&&&&&&&\\                  \hline                                            16&&&&&&&&&&&&&&&&&&&&\\                          17&\bullet &&&&&&&&&&&&&&&&&\bullet &\bullet &\bullet \\                                            18&&&&&&&&&&&&&&&&&&&&\\                          \hline                                            19&\bullet &&&&&&&&&&&&&&&&&&&\\                  20&&&&&&&&&&&&&&&&&&&&\bullet  \\                 21&\bullet &&&&&&&&&&&&&&&&&&&\\                  \hline                                            22&&&&&&&&&&&&&&&&&&&&\\                          23&\bullet &&&&&&&&&&&&&&&&&&& \\                 24&&&&&&&&&&&&&&&&&&&&\\                          \hline                                            25&\bullet &&&&&&&&&&&&&&&&&&&\\
26&&&&&&&&&&&&&&&&&&&&\\
\end{array}
\]
\caption{\label{landscape}Exceptions for $1\leq d\leq 20$ and $1\leq n\leq 26$.}
\end{table}

\begin{Acknowledgments}
We thank George Andrews and Ken Ono for encouraging feedback on the significance of the topic.
We further thank the two referees for carefully checking the calculations performed in the paper 
and their helpful comments.
\end{Acknowledgments}

\end{document}